\documentclass[11pt,a4paper,twoside]{article}
\usepackage[T1]{fontenc}
\usepackage[utf8]{inputenc}
\usepackage{amsmath,amssymb,amsthm,mathtools}
\usepackage{microtype}
\usepackage[a4paper,textwidth=150mm,textheight=230mm,centering,
            headheight=14pt,headsep=7mm,footskip=10mm]{geometry}
\usepackage{fancyhdr,titlesec,enumitem}
\usepackage[hidelinks,unicode,pdfencoding=auto]{hyperref}
\DeclareUnicodeCharacter{2011}{\nobreakdash-}
\allowdisplaybreaks[2]
\newtheoremstyle{actaplain}{3pt}{3pt}{\itshape}{}{
\bfseries}{}{1em}{}
\newtheoremstyle{actadefinition}{3pt}{3pt}{}{}{\bfseries}{}{1em}{}
\theoremstyle{actaplain}
\newtheorem{theorem}{Theorem}[section]
\newtheorem{lemma}[theorem]{Lemma}
\newtheorem{proposition}[theorem]{Proposition}
\newtheorem{corollary}[theorem]{Corollary}
\theoremstyle{actadefinition}

\newtheorem{example}[theorem]{Example}
\newtheorem{remark}[theorem]{Remark}
\newtheorem{question}[theorem]{Question}
\numberwithin{equation}{section}
\titleformat{\section}{\normalsize\bfseries}{\thesection}{1em}{}
\titleformat{\subsection}{\normalsize\bfseries}{\thesubsection}{1em}{}
\titlespacing*{\section}{0pt}{2.0ex plus .5ex minus .2ex}{1.0ex}
\titlespacing*{\subsection}{0pt}{1.8ex plus .5ex minus .2ex}{.8ex}
\def\ManuscriptAuthor{Puyu  Cui$^{1}$, Ran Li$^{1,\ast}$ and YuFeng Lu$^{2}$}
\def\ManuscriptAffiliation{$^{1}$School of Mathematics, Liaoning Normal University, Dalian 116029,   China\\ $^{2}$School of Mathematical Sciences, Dalian University of Technology, Dalian 116024, China}
\def\ManuscriptEmail{cuipuyu1234@163.com\\liranmika@163.com\\lyfdlut@dlut.edu.cn}
\newcommand{\ShortTitle}{Commuting slant Toeplitz operators}
\newcommand{\ShortAuthors}{P. Cui, R. Li and Y. Lu}
\fancypagestyle{firstpage}{\fancyhf{}\fancyfoot[C]{\small\thepage}
  }
\newcommand{\D}{\mathbb D}
\newcommand{\T}{\mathbb T}
\newcommand{\C}{\mathbb C}
\newcommand{\Z}{\mathbb Z}
\newcommand{\N}{\mathbb N}

\newcommand{\Berg}{A^2(\D)}
\newcommand{\Poly}{\mathcal P_{\infty}}
\DeclareMathOperator{\Hol}{Hol}

\hypersetup{pdftitle={Commuting slant Toeplitz operators with polyanalytic and harmonic symbols},pdfsubject={Acta Mathematica Sinica, English Series: submission-format manuscript},pdfauthor={}}
\begin{document}
\thispagestyle{firstpage}
\begin{center}
{\large\bfseries Commuting slant Toeplitz operators for polyanalytic and harmonic symbols\par}
\ifx\ManuscriptAuthor\empty\else
\vspace{1.6ex}
{\normalsize\ManuscriptAuthor\par}
\fi
\ifx\ManuscriptAffiliation\empty\else
\vspace{.6ex}
{\small\itshape\ManuscriptAffiliation\par}
\fi
\end{center}
\begin{NoHyper}
\begingroup
\renewcommand{\thefootnote}{}
\footnotetext{The third author is supported by the National Natural Science Foundation of China (Grant No.12671149) and the second author is supported by the Educational Foundation of Liaoning Province (Grant No. LJ212610165023).}
\renewcommand{\thefootnote}{\ensuremath{\ast}}
\footnotetext{Corresponding author.}
\endgroup
\end{NoHyper}
\vspace{1ex}
\begingroup
\small
\noindent\textbf{Abstract}\quad
We study commutativity and zero products of slant Toeplitz
operators on the Bergman space of the unit disk.
For bounded polyanalytic symbols and for bounded harmonic
symbols with boundary values in $W^{1,2}(\mathbb T)$,
we prove that two such operators commute if and only if
their symbols are linearly dependent over $\mathbb C$,
with both symbols belonging to the same class.
In the polyanalytic case, no boundedness assumption is
imposed on the individual analytic components.
We also show that every finite-rank commutator in this
class is zero and that every finite product associated
with nonzero polyanalytic symbols has infinite rank.
Finally, for $s\ge2$, we prove that a product of $s$ slant
Toeplitz operators with bounded harmonic symbols whose
boundary values belong to $W^{s-1,2}(\mathbb T)$ is zero
if and only if at least one symbol vanishes identically.
\par\medskip
\noindent\textbf{Keywords}\quad Slant Toeplitz operator, Bergman space, polyanalytic symbol, harmonic symbol,  commutativity
\par\smallskip
\noindent\textbf{MSC 2020}\quad 47B35, 30H20
\par
\endgroup
\vspace{1ex}
\section{Introduction and Preliminaries}

Let $\D=\{z\in\C:|z|<1\}$. $\Hol(\D)$ denotes the space of all holomorphic functions on  $\D$.
The Bergman space $\Berg = \Hol(\D) \cap L^2(\D,dA)$ is the closed subspace of analytic functions,  and $L^2(\D,dA)$ is the space consisting of square-integrable functions with normalized area measure $dA(z)=\frac{r\,dr\,d\theta}{\pi}$.

For $f\in L^\infty(\D, dA)$, the Toeplitz operator $T_f$ and the slant Toeplitz operator $B_f$ are
\[
 T_fh=P(fh),\qquad B_fh=WT_fh,
\]
where \[ Wz^{2n}=z^n,\qquad Wz^{2n+1}=0\qquad(n \ge 0)\] and $P$ is the Bergman projection.
Simple computation gives
 $\|B_f\|\le\sqrt2\|f\|_\infty$ and $B_f$ is a bounded operator for $f \in L^\infty (\D, dA)$.
For further details concerning the Bergman space and  Toeplitz operators, see~\cite{Zhu}. A general account of slant Toeplitz operators can be found in \cite{MCHo-1,MCHo-2}.

For $N\in\mathbb{N}$, a function is called  polyanalytic of order at most $N+1$ if  $\overline{\partial}^{N+1}f=0$. Set
\begin{equation}\label{eq:poly-class}
 \Poly=
 \bigcup_{N=0}^{\infty}
 \left\{f\in L^\infty(\D, dA):
 f=\sum_{j=0}^N\overline z^jh_j,
 \ h_j\in\Hol(\D)\right\}.
\end{equation}
That $\Poly$ is a complex vector space follows from the possibility of extending finite representations with zero components to a uniform order.
It should be noted that the individual components \(h_j\) are not required to be bounded.

The core problem for Toeplitz operators lies in the mutual correspondence between operator‑theoretic properties and the properties of symbol functions. Commutativity is a fundamental algebraic property of operators.
 On the Bergman space, commutativity for harmonic symbols was studied by Axler and \v{C}u\v{c}kovi\'c~\cite{AxlerCuckovic}. Brown--Halmos type product results and the range of the Berezin transform were subsequently investigated by Ahern and \v{C}u\v{c}kovi\'c~\cite{AhernCuckovic} and Ahern~\cite{Ahern}. These results illustrate the usefulness of extracting analytic and differential information from operator identities.

Slant Toeplitz operators involve an additional operation $W$. This operation changes the commutativity problem. In particular, $B_1=WT_1=W$, rather than the identity operator $T_1=I$. Lu, Liu and Yang~\cite{LuLiuYang} obtained a linear-dependence characterization for commuting slant Toeplitz operators with bounded analytic symbols. Liu and Lu~\cite{LiuLu} studied products of Slant Toeplitz operators with coanalytic symbols and harmonic polynomial symbols. Wang and Zhao \cite{WangZhao} considered a symbol consisting of a coanalytic polynomial and a bounded analytic function, paired with a bounded analytic symbol. A related common-polynomial-part problem was considered by Zhang \cite{Zhang}.

In section 2, we first consider commuting slant Toeplitz operators with polyanalytic symbols. Theorem~\ref{thm:poly} shows that for any two symbols in this class,
\begin{equation}\label{eq:intro-criterion}
 B_fB_g = B_gB_f \quad \Longleftrightarrow \quad \alpha f+\beta g=0\text{ for some }(\alpha,\beta)\ne(0,0).
\end{equation}
No boundary smoothness is needed for this result.
Next, we characterize the equivalence between the commuting problem for slant Toeplitz operators with these symbols  and the associated finite-rank commuting problem, the infinite-rank property of non-zero products of such operators, and the validity of the related zero-product problem.
In Section 3, we  consider commuting slant Toeplitz operators with  bounded harmonic functions with  radial boundary functions in the Sobolev space $W^{1,2}(\T)$. This assumption includes $C^1(\T)$ and Lipschitz boundary values and is weaker than twice continuous differentiability. Theorem~\ref{thm:har} establishes~\eqref{eq:intro-criterion} for this class as well. A harmonic symbol may have infinitely many $\overline{z}^k~(k \in \N )$, therefore the  radial boundary  functions condition as opposed to the preceding case is needed. The zero product problem  of harmonic symbol  slant Toeplitz operators also holds.

\section{Commuting slant Toeplitz operators with polyanalytic symbols}

In this section, we start with some notation.
Set $ f(z)=\sum_{j=0}^N\overline z^jh_j(z) \in\Poly$ and  write
$ h_j(z)=\sum_{k=0}^{\infty}a_{j,k}z^k$ throughout this section.
Denote $a_{j,k}=0$ for $k<0$. For each integer $t$ and $x$ satisfying $x+t \ge 0$, define the rational
function
\begin{equation}\label{eq:rational}
 R_{f,t}(x)
 =(x+t+1)\sum_{j=0}^N
       \frac{a_{j,t+j}}{x+t+j+1}.
\end{equation}
With these notations, the following lemma holds.
\begin{lemma}\label{lem:rf}
For every integer $n\geq0$, we have
\begin{equation}\label{eq:monomial}
 B_fz^n
 =\sum_{\substack{t\ge \max\{-N, -n\}\\ n+t\text{ even}}}
       R_{f,t}(n)z^{(n+t)/2}.
\end{equation}
Moreover,
\begin{equation}\label{eq:injective}
R_{f,t}\equiv0 \quad\text{ if and only if}\quad a_{j,t+j}=0\quad \text{ for every}\quad 0\leq j\leq N.
\end{equation}
Consequently, if $f\neq0$,
\begin{equation}\label{eq:lowest-index}
 r(f)=\min\{t\in\mathbb Z:R_{f,t}\not\equiv0\}
\end{equation}
is well defined and satisfies $r(f)\geq-N$.
\end{lemma}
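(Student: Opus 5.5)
The plan is to compute the Bergman coefficients of $P(fz^n)$ directly, reading them off from inner products against monomials, and then apply $W$. Because the components $h_j$ need not be bounded, I will not expand $fz^n$ as a series and project it term by term. Instead, for $m\ge0$ I use that the coefficient of $z^m$ in $P(fz^n)$ equals $(m+1)\langle fz^n,z^m\rangle=(m+1)\int_\D f z^n\overline z^m\,dA$, where $\|z^m\|^2=1/(m+1)$. Since $f\in L^\infty$, dominated convergence gives
\[
\int_\D f z^n\overline z^m\,dA=\lim_{\rho\to1}\int_{|z|<\rho}f z^n\overline z^m\,dA .
\]
On the disk $|z|\le\rho$ each power series $h_j$ converges uniformly, so there I can integrate term by term in polar coordinates.

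For fixed $j$, the $\theta$-integral keeps only the index $k=m-n+j$. The radial part is then $2\int_0^\rho r^{2m+2j+1}\,dr=\rho^{2m+2j+2}/(m+j+1)$. Letting $\rho\to1$, and writing $m=n+t$, the coefficient of $z^{n+t}$ in $T_fz^n$ becomes
\[
(n+t+1)\sum_{j=0}^N \frac{a_{j,t+j}}{n+t+j+1}=R_{f,t}(n).
\]
This formula applies only for $n+t\ge0$. The convention $a_{j,k}=0$ for $k<0$ makes every term with $t<-N$ vanish, since then $t+j<0$ for all $0\le j\le N$; so the sum runs over $t\ge\max\{-N,-n\}$. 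Applying $W$ deletes the terms with $n+t$ odd and sends $z^{n+t}$ to $z^{(n+t)/2}$ when $n+t$ is even. Since $W$ is bounded, it can be applied coefficientwise to the $L^2$-convergent expansion, and this gives \eqref{eq:monomial}.

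For \eqref{eq:injective}, the direction ``$\Leftarrow$'' is immediate. For ``$\Rightarrow$'', I will regard $R_{f,t}(x)/(x+t+1)=\sum_j a_{j,t+j}/(x+t+j+1)$ as a rational function in $x$. If $R_{f,t}$ vanishes at the infinitely many admissible integers $x$ (those with $x\ge-t$), it vanishes identically. The poles $-(t+j+1)$ for $0\le j\le N$ are distinct, so by uniqueness of partial fraction decompositions (equivalently, taking the residue at each pole) every $a_{j,t+j}=0$.

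Finally, suppose $f\ne0$. Then some $a_{j,k}\ne0$, and by \eqref{eq:injective} $R_{f,t}\not\equiv0$ for $t=k-j$, so the set defining $r(f)$ is nonempty. For $t<-N$ all the $a_{j,t+j}$ vanish, so $R_{f,t}\equiv0$; hence the set is bounded below by $-N$ and its minimum $r(f)\ge-N$ exists. The only delicate step is justifying the coefficient computation without assuming the $h_j$ are bounded, and the $\rho\to1$ limit argument above handles it.
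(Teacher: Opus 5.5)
Your proof is correct. It follows the paper's route for \eqref{eq:monomial} and differs only in how you prove \eqref{eq:injective}.

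For the coefficient formula, your truncation to $|z|<\rho$ with dominated convergence does the same job as the paper's Fubini step, where the circle integral $\sum_j a_{j,m-n+j}r^{m-n+2j}$ is computed for each fixed $r<1$. In both versions the unboundedness of the $h_j$ is harmless, because only the bounded function $f$ is integrated over the whole disk and the $j$-sum is finite.

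For \eqref{eq:injective}, the paper evaluates at the $N+1$ points $x+t=2i$, $0\le i\le N$, and inverts the Cauchy matrix $\bigl(1/(2i+j+1)\bigr)_{i,j=0}^N$ using the Cauchy determinant formula. You instead divide by $x+t+1$ and read off $a_{j,t+j}$ as the residue at the distinct pole $x=-(t+j+1)$. The paper's argument shows slightly more: vanishing at those $N+1$ specific points already forces every $a_{j,t+j}=0$. Your argument needs identical vanishing, which you correctly get from infinitely many zeros, but it is shorter and avoids the determinant formula.

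You also spell out why the set defining $r(f)$ is nonempty and bounded below by $-N$. Some $a_{j,k}\neq0$ gives $R_{f,k-j}\not\equiv0$, and $t<-N$ forces all $a_{j,t+j}=0$. The paper leaves this to ``the above arguments''.
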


\begin{proof}
Since $f\in L^\infty(\D, dA)$, we have $fz^n\overline{z}^m\in L^\infty(\D, dA)$ for every $n,m \ge0$.
Fix $n,m\ge0$ and $0<r<1$. A direct computation implies
\[
 \int_0^{2\pi}
 f(re^{\mathrm{i}\theta})
 e^{\mathrm{i}(n-m)\theta}\,
 \frac{d\theta}{2\pi}
 =
 \sum_{j=0}^N
 a_{j,m-n+j}r^{m-n+2j},
\]
where $a_{j,k}=0$ for $k<0$.
Noting  $fz^n\overline z^{\,m}\in L^1(\D, dA)$, Fubini's theorem gives
\begin{align*}
 \langle fz^n,z^m\rangle
 &=
 2\int_0^1 r^{n+m+1}
 \left(
 \int_0^{2\pi}
 f(re^{\mathrm{i}\theta})
 e^{\mathrm{i}(n-m)\theta}
 \frac{d\theta}{2\pi}
 \right)dr\\
 &=
 2\sum_{j=0}^N
 a_{j,m-n+j}\int_0^1r^{2m+2j+1}\,dr\\
 &=
 \sum_{j=0}^N
 \frac{a_{j,m-n+j}}{m+j+1}.
\end{align*}
Thus the coefficient of $z^m$ in $T_fz^n$ is
\[
 (m+1)\langle fz^n,z^m\rangle
 =
 (m+1)\sum_{j=0}^N
 \frac{a_{j,m-n+j}}{m+j+1}
 =
 R_{f,m-n}(n).
\]
Consequently,
\begin{equation}\label{eq:basis}
 T_fz^n
 =
 \sum_{t\ge\max\{-N,-n\}}
 R_{f,t}(n)z^{n+t},
\end{equation}
with convergence in $A^2(\D)$.
Applying the bounded operator $W$ yields
\eqref{eq:monomial}.

If $R_{f,t}\equiv0$, then $\sum_{j=0}^N\frac{a_{j,t+j}}{n+t+j+1}=0$  for $t\ge \max\{-N, -n\}$ and  $ n+t\text{ even}$.
Fix $t \ge -N$ and set $n+t =2i$, $i \ge 0$. Then $\sum_{j=0}^N\frac{a_{j,t+j}}{2i+j+1}=0$ for $0 \le i, j \le N$. These equations imply
\[
A_t
\begin{pmatrix}
a_{0,\ t} \\
a_{1, \ t+1}\\
\vdots\\
a_{N, t+N}
\end{pmatrix}=0, \qquad A_t =\left(\frac{1}{2i+j+1}\right)_{i, j =0}^N.
\]
Note that the coefficient matrix $A_t$ is a Cauchy matrix, and by  the Cauchy determinant formula,
$$ \mathrm{det} A_t = \frac{\prod_{0\le i<j \le N}(x_j - x_i)(y_j - y_i)}{\prod_{i,j=0}^N(x_i + y_j)}\neq 0,\qquad x_i = 2i,\quad y_j = j + 1.$$
Therefore, $a_{j, \ t+j}=0$ for $j =0, \cdots, N$. The other direction is trivial.

From the above arguments, \eqref{eq:lowest-index} is well defined for nonzero $f$ and $r(f)\geq-N$.
\end{proof}

In the paper,  we write  $(h)_m~(m \ge 0)$ for the $m$-order Taylor coefficient of the analytic function $h$.
\begin{remark}\label{rem:finite-rows}
If $f\neq0$ for all $n>2m-r(f)$, then \eqref{eq:monomial} gives
\[
\left(B_fz^n\right)_m=R_{f,2m-n}(n)=0.
\]
\end{remark}

To prove the main result of this section, we compare coefficients from the power‑series expansions of analytic functions. For this purpose, two lemmas are introduced in advance.

\begin{lemma}\label{lem:ind}
Let $f,g\in\Poly$ be nonzero. If $B_fB_g=B_gB_f$, then
$r(f)=r(g)$.
\end{lemma}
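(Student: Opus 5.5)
We prove this by comparing the lowest-degree Taylor coefficients of $B_fB_gz^n$ and $B_gB_fz^n$ for a suitably chosen monomial $z^n$, using the expansion \eqref{eq:monomial} of Lemma~\ref{lem:rf}. Write $p=r(f)$ and $q=r(g)$, both well defined by \eqref{eq:lowest-index}. Suppose, for contradiction, that $p\neq q$. By symmetry of the hypothesis $B_fB_g=B_gB_f$ in $f$ and $g$, we may assume $p<q$.

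\textbf{Step 1 (composition formula).} Since $B_f$ is bounded and \eqref{eq:monomial} converges in $A^2(\D)$, we may apply $B_f$ termwise to $B_gz^n$. This gives
\[
B_fB_gz^n=\sum_{\substack{t\ge q,\ n+t \text{ even}}}\ \sum_{\substack{s\ge p,\ (n+t)/2+s\text{ even}}} R_{g,t}(n)\,R_{f,s}\!\left(\tfrac{n+t}{2}\right) z^{((n+t)/2+s)/2}.
\]
Here we take $n\ge N$, so that the constraint $\max\{-N,-n\}$ is automatic; we also use that $R_{g,t}\equiv0$ for $t<q$ and $R_{f,s}\equiv0$ for $s<p$. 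Every exponent that occurs satisfies
\[
\tfrac{(n+t)/2+s}{2}\ \ge\ \tfrac{n+q+2p}{4},
\]
with equality only for $t=q$, $s=p$. In the same way, every monomial in $B_gB_fz^n$ has degree at least $\tfrac{n+p+2q}{4}$.

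\textbf{Step 2 (choice of $n$).} Choose $n$ with $n\equiv q \pmod 2$ and $\tfrac{n+q}{2}\equiv p\pmod 2$. Equivalently, $n\equiv 2p-q\pmod 4$. For such $n$, the term with $t=q$, $s=p$ is actually present, and it is the unique term of degree $\tfrac{n+q+2p}{4}$. Its coefficient is
\[
R_{g,q}(n)\,R_{f,p}\!\left(\tfrac{n+q}{2}\right).
\]
Both $R_{g,q}$ and $R_{f,p}$ are nonzero rational functions, so each has only finitely many zeros. Hence for all sufficiently large $n$ in this residue class mod $4$, this coefficient is nonzero.

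\textbf{Step 3 (contradiction).} Since $p<q$, we have $\tfrac{n+q+2p}{4}<\tfrac{n+p+2q}{4}$. So the Taylor coefficient of $B_gB_fz^n$ of order $\tfrac{n+q+2p}{4}$ vanishes, whereas the corresponding coefficient of $B_fB_gz^n$ does not. This contradicts $B_fB_g=B_gB_f$, and therefore $r(f)=r(g)$.

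The main point needing care is Step 2. We must arrange the parities so that the lowest-index terms survive both applications of $W$. We must also use that $R_{f,p}$ and $R_{g,q}$ are not identically zero, so that by \eqref{eq:injective} they have only finitely many zeros and suitable large $n$ exist.
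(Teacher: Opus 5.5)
Your proof is correct and follows essentially the same route as the paper. Both arguments compare the lowest-degree coefficient $R_{g,q}(n)R_{f,p}\left(\frac{n+q}{2}\right)$ of $z^{(n+q+2p)/4}$ in one product with the strictly larger minimal degree in the other product, for $n$ in the residue class $n\equiv 2p-q\pmod 4$; the paper's condition $n+r+2s\equiv0\pmod 4$ is the same choice with the roles of $f$ and $g$ interchanged. Your explicit handling of the termwise composition and the parity bookkeeping just spells out what the paper leaves implicit.
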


\begin{proof}
Write $r=r(f)$ and $s=r(g)$.  Suppose that $r>s$. Choose sufficiently large $n$  satisfying
\begin{equation}\label{eq:poly-low}
 n+r+2s\equiv0\pmod4.
\end{equation}
Using \eqref{eq:monomial}, the lowest-order nonzero term in the power series expansion of $B_gB_fz^n$ is \[R_{f,r}(n) R_{g,s}\left(\frac{n+r}{2}\right)z^{\frac{n+r+2s}{4}}.\]
Its coefficient is nonzero for sufficiently large $n$, since $ R_{f,r}, R_{g,s} $ are nonzero rational functions.
On the other hand, every nonzero monomial in the  power series expansion of $B_fB_gz^n$ has order at least \[ \frac{n+s+2r}{4} > \frac{n+r+2s}{4}.\]
Thus the coefficient of $z^{\frac{n+r+2s}{4} }$ is zero in $B_fB_gz^n$ but nonzero in $ B_gB_fz^n$.
This contradicts $B_fB_g = B_gB_f$. Hence $r \le s$. Interchanging $f$ and $g$ gives $s \le r$, and therefore $r=s$.
\end{proof}

\begin{lemma}\label{lem:rational-dila}
Let  $r $ be a fixed integer. If a nonzero rational function $R(w)$ for $w\in \mathbb{C}$ satisfies
\begin{equation}\label{eq:rational-inva}
 R\left(\frac{w+r}{2}\right)=R(w),
\end{equation}
then $R$ is constant.
\end{lemma}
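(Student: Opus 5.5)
The natural approach is to study the poles and the behaviour at infinity of $R$ under the affine map $\phi(w)=\frac{w+r}{2}$. This map is a contraction with unique fixed point $w_0=r$, since $\phi(r)=\frac{r+r}{2}=r$. Write $R=P/Q$ in lowest terms with $P,Q$ polynomials.

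The first step is to show that $R$ has no poles in $\C$. Suppose $w_1$ is a pole of $R$. The identity \eqref{eq:rational-inva} says $R\circ\phi=R$ as rational functions, so $w$ is a pole of $R$ exactly when $\phi(w)$ is a pole of $R$. Since $\phi$ is a bijection of $\C$ with inverse $\phi^{-1}(w)=2w-r$, the pole set $Z$ is invariant under both $\phi$ and $\phi^{-1}$. The orbit $\{\phi^{-k}(w_1)\}_{k\ge0}$ then lies in $Z$. We have $\phi^{-k}(w_1)-r=2^k(w_1-r)$, so these points are pairwise distinct unless $w_1=r$. Because $Z$ is finite, the only possible pole is $w_1=r$.

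The case of a pole at $r$ itself needs a separate argument, and I expect this to be the only slightly delicate point. Suppose $R$ has a pole of order $m\ge1$ at $r$, so $R(w)\sim c\,(w-r)^{-m}$ with $c\ne0$. Then
\[
R(\phi(w))\sim c\,2^{m}(w-r)^{-m},
\]
and comparing leading Laurent coefficients in \eqref{eq:rational-inva} gives $c\,2^m=c$, which is impossible. Hence $R$ has no poles at all, so $R$ is a polynomial.

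The same scaling comparison finishes the argument. Expand $R$ about the fixed point:
\[
R(w)=\sum_{k=0}^{d}b_k(w-r)^k .
\]
Since $\phi(w)-r=\frac{w-r}{2}$, the identity \eqref{eq:rational-inva} becomes $\sum_k b_k2^{-k}(w-r)^k=\sum_k b_k(w-r)^k$. Comparing coefficients gives $b_k(2^{-k}-1)=0$, so $b_k=0$ for every $k\ge1$. Therefore $R\equiv b_0$ is constant, which is nonzero because $R$ is nonzero.

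One could in fact skip the pole analysis by using Laurent expansions at $r$ directly. At the level of formal power series in $(w-r)$, write $R=(w-r)^{v}u$ with $u$ a unit, where $v\in\Z$. The rescaling $w-r\mapsto (w-r)/2$ multiplies each coefficient of index $k$ by $2^{-k}$. The identity \eqref{eq:rational-inva} then forces every Laurent coefficient of index $k\neq0$ to vanish, so $R$ is the constant $b_0$. I would present this single-expansion version as the proof, since it handles poles and polynomial part uniformly.
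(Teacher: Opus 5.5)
Your proof is correct, and the single-expansion version you say you would present is essentially the paper's argument: shift to the fixed point $r$ of $w\mapsto (w+r)/2$ (the paper sets $S(w)=R(r+w)$ so that $S(w/2)=S(w)$), take the Laurent expansion there, and compare coefficients to get $(2^{-k}-1)c_k=0$ for $k\neq 0$. Your preliminary pole-orbit argument is also valid, but as you observe, it is unnecessary once the Laurent expansion at $r$ is used.
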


\begin{proof}
Notice that $R\left(\frac{(w+r)+r}{2}\right)=R(w+r)=R\left(\frac{w}{2}+r\right)$.
Set $S(w)=R(r+w)$. Then \eqref{eq:rational-inva} becomes
$S(w/2)=S(w)$. Therefore it has a
convergent Laurent expansion in a punctured neighborhood of zero,
\[
 S(w)=\sum_{k=k_0}^{\infty}c_kw^k,
 \qquad k_0\in\mathbb Z,\quad c_{k_0}\neq0.
\]
Comparison of the coefficients yields
$(2^{-k}-1)c_k=0$ for every $k$. It follows that $S$ is a constant function, and hence so is $R$.
\end{proof}

The following theorem is the main conclusion of this section.
\begin{theorem}\label{thm:poly}
Let $f,g\in\Poly$. Then  $B_fB_g=B_gB_f$ if and only if
there exist constants $\alpha$ and $\beta$, not all zero, such that $\alpha f+\beta g =0$.
\end{theorem}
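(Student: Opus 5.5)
The plan is to prove sufficiency directly and to prove necessity by induction on the lowest index $r(\cdot)$ of Lemma~\ref{lem:rf}, using Lemma~\ref{lem:ind} and Lemma~\ref{lem:rational-dila}.

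\emph{Sufficiency.} If $\alpha f+\beta g=0$ with $(\alpha,\beta)\neq(0,0)$, then one symbol is a scalar multiple of the other. Since $f\mapsto B_f$ is linear, the corresponding operators are scalar multiples of each other and therefore commute.

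\emph{Necessity, setup.} Assume $B_fB_g=B_gB_f$. If $f=0$ or $g=0$ there is nothing to prove, so assume both are nonzero. Pad both representations with zero components to a common order $N$. With this choice, $f\mapsto R_{f,t}$ is linear on $\Poly$ for each fixed $t$, because $R_{f,t}$ is linear in the coefficients $a_{j,k}$. By Lemma~\ref{lem:ind}, $r(f)=r(g)=:r$.

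\emph{Comparing lowest coefficients.} Take $n$ large with $n+3r\equiv 0 \pmod 4$. By \eqref{eq:monomial}, both $B_fB_gz^n$ and $B_gB_fz^n$ have all monomials of order at least $(n+3r)/4$. Equating the coefficients of $z^{(n+3r)/4}$ gives
\[
R_{g,r}(n)\,R_{f,r}\!\left(\tfrac{n+r}{2}\right)=R_{f,r}(n)\,R_{g,r}\!\left(\tfrac{n+r}{2}\right).
\]
This holds for infinitely many integers $n$, namely an arithmetic progression. Both sides are rational functions of $n$, so the identity holds identically in $w\in\C$. Set $Q=R_{f,r}/R_{g,r}$, which is a nonzero rational function. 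Then $Q\big(\tfrac{w+r}{2}\big)=Q(w)$. By Lemma~\ref{lem:rational-dila}, $Q\equiv c$ for some constant $c\neq0$, so $R_{f,r}=c\,R_{g,r}$.

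\emph{Descent.} Put $h=f-cg\in\Poly$. By linearity of $f\mapsto B_f$, we have $B_hB_g=B_fB_g-cB_g^2=B_gB_f-cB_g^2=B_gB_h$. Suppose $h\neq0$. Linearity of $R_{\cdot,t}$ gives $R_{h,t}=R_{f,t}-cR_{g,t}$. This vanishes for $t<r$, since both terms vanish by minimality of $r$, and it also vanishes for $t=r$. Hence $r(h)>r=r(g)$, which contradicts Lemma~\ref{lem:ind} applied to the commuting pair $h,g$. Therefore $h=0$, that is, $f-cg=0$, and we may take $(\alpha,\beta)=(1,-c)$.

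\emph{Main obstacle.} The delicate step is the coefficient comparison. One must check that $(n+3r)/4$ is the lowest possible order in both products, and that the agreement on an arithmetic progression of $n$ upgrades to an identity of rational functions. Once these are in place, the dilation lemma and the descent argument close the proof without any boundedness assumption on the individual $h_j$.
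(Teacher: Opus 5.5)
Your proposal is correct and follows essentially the same route as the paper. Both use Lemma~\ref{lem:ind} to get $r(f)=r(g)=r$, then compare the coefficient of $z^{(n+3r)/4}$ along $n+3r\equiv0\pmod 4$, apply Lemma~\ref{lem:rational-dila} to obtain $R_{f,r}=c\,R_{g,r}$, and finish with the descent $h=f-cg$, where $r(h)>r$ contradicts Lemma~\ref{lem:ind}.
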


\begin{proof}
It suffices to prove the necessity. Without loss of generality, let $f\neq 0$ and $g \neq 0$.
Lemma~\ref{lem:ind} gives $r(f)=r(g)=r$.
Set nonzero rational functions
\[
 F(x)=R_{f,r}(x),\qquad G(x)=R_{g,r}(x).
\]
 For sufficiently large integers
$n$ satisfying
\begin{equation}\label{eq:poly-equal-congruence}
 n+3r\equiv0\pmod4,
\end{equation}
the lowest possible order in the power expansion of $B_fB_g z^n $ is $(n+3r)/4$.
The same calculation as in the proof of
Lemma~\ref{lem:ind} shows that the corresponding coefficients satisfy
\[
 G(n)F\left(\frac{n+r}{2}\right)
 -F(n)G\left(\frac{n+r}{2}\right)=0
\]
for infinitely many integers $n$. Since a nonzero rational function has only finitely many zeros, we have
\begin{equation}\label{eq:poly-lowest-rational-identity}
 G(x)F\left(\frac{x+r}{2}\right)
 =F(x)G\left(\frac{x+r}{2}\right)
\end{equation}
identically. Rewriting the above identity implies
$F/G$ satisfies \eqref{eq:rational-inva}. By
Lemma~\ref{lem:rational-dila}, there is a nonzero constant
$\lambda$ such that
\begin{equation}\label{eq:poly-lowest-proportionality}
 F=\lambda G.
\end{equation}

Set $h=f-\lambda g$. By adding zero terms to the shorter of the
two representations in $\Poly$, both symbols can be written with the same
 $N$ in \eqref{eq:poly-class}. It follows that
$h\in\Poly$ and
\[
 R_{h,t}=R_{f,t}-\lambda R_{g,t}
 \qquad(t\in\mathbb Z).
\]
By Remark \ref{rem:finite-rows},  both terms on the right of the above equation  vanish for $t<r$. For $t=r$, the difference
vanishes by \eqref{eq:poly-lowest-proportionality}. Statement
\eqref{eq:injective} ensures that the term of index $r$ is cancelled. Consequently, if $h\neq0$, then
\begin{equation}\label{eq:ind2}
 r(h)>r.
\end{equation}
Noting that
\begin{align*}
 B_hB_g-B_gB_h
 &=(B_f-\lambda B_g)B_g-B_g(B_f-\lambda B_g)\\
 &=B_fB_g-B_gB_f=0.
\end{align*}
If $h\neq0$, using Lemma~\ref{lem:ind}, $r(h)=r(g)=r$, contradicting \eqref{eq:ind2}.
Therefore, \eqref{eq:injective} implies $h=0$, so $f=\lambda g$.
\end{proof}

In the following part, denote
\[
 [B_f,B_g]=B_fB_g-B_gB_f.
\]
The same coefficient argument induces the following result.

\begin{corollary}\label{cor:finite-rank}
Let $f,g\in\Poly$. The following conditions are equivalent:
\begin{enumerate}
 \item [(1)] $[B_f,B_g]$ has finite rank;
 \item [(2)]$[B_f,B_g]=0$;
 \item [(3)] $f$ and $g$ are linearly dependent over $\mathbb C$.
\end{enumerate}
\end{corollary}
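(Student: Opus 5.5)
The trivial implications come first. Theorem~\ref{thm:poly} gives (2)$\Leftrightarrow$(3), and (2)$\Rightarrow$(1) is immediate. The real task is (1)$\Rightarrow$(2) or (3). The plan is to redo the proofs of Lemma~\ref{lem:ind} and Theorem~\ref{thm:poly}, replacing ``$[B_f,B_g]z^n=0$'' with a statement that holds for all large $n$ whenever the commutator has finite rank.

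The key observation concerns the lowest-order coefficients. By \eqref{eq:monomial}, for $n$ large the image $[B_f,B_g]z^n$ vanishes to order at least roughly $n/4 - C$, with $C$ depending only on $r(f), r(g)$. So the coefficient of $z^{\lfloor (n+\cdot)/4\rfloor}$ studied in those proofs sits at an index tending to infinity with $n$.

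Let $K$ be the finite-dimensional range of $[B_f,B_g]$. Choose $M$ such that the coefficient functionals $h\mapsto (h)_m$ for $m\ge M$ act as follows. Since $K$ is finite dimensional, we can pick a basis $k_1,\dots,k_p$. Each $k_i$ is a power series, but nonzero coefficients of $k_i$ may occur at arbitrarily high indices, so we cannot simply say they vanish. Instead:

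\begin{itemize}
\item Consider the vectors $v_n=[B_f,B_g]z^n$ for the relevant arithmetic progression of $n$.
\item Each $v_n\in K$ has vanishing coefficients below order $\approx n/4$.
\item Take $p+1$ such $n$ with widely spaced valuations $m_0<m_1<\dots$. Any nonzero $v_n$ in $K$ has valuation $m(n)\to\infty$.
\item Elements of a $p$-dimensional space of power series take at most $p$ distinct valuations, by Gaussian elimination on leading terms.
\end{itemize}

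Hence $v_n=0$ for all but finitely many $n$ in the progression; otherwise we would get infinitely many distinct valuations in $K$. Thus finite rank forces $[B_f,B_g]z^n=0$ for all sufficiently large $n$.

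Now we rerun the arguments. The proof of Lemma~\ref{lem:ind} only used the vanishing of one coefficient of $[B_f,B_g]z^n$ for sufficiently large $n$ in an arithmetic progression, so $r(f)=r(g)=r$ follows as before. Likewise, the proof of Theorem~\ref{thm:poly} yields \eqref{eq:poly-lowest-rational-identity} for infinitely many $n$, hence identically. Lemma~\ref{lem:rational-dila} then gives $F=\lambda G$.

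For $h=f-\lambda g$ we have $[B_h,B_g]=[B_f,B_g]$, which still has finite rank. If $h\ne0$, the finite-rank version of Lemma~\ref{lem:ind} gives $r(h)=r$, contradicting $r(h)>r$. So $f=\lambda g$, which is (3), and Theorem~\ref{thm:poly} yields (2).

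The main obstacle is the valuation argument: making precise that a finite-dimensional space contains nonzero vectors of at most $\dim K$ distinct orders of vanishing. One must also check that the $v_n$ for distinct large $n$ really do have valuations tending to infinity, which follows from the lower bound $(n+\min)/4$ on their orders.
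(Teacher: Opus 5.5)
Your argument is correct, but it is organized differently from the paper's. The paper never shows that $[B_f,B_g]z^n$ vanishes. Instead it tracks two explicit coefficient sequences: the leading term $R_{g,s}(n)R_{f,r}\bigl(\frac{n+s}{2}\bigr)z^{(n+2r+s)/4}$ when $r\ne s$, and the coefficient $Q(n)$ at degree $(n+3r)/4$ when $r=s$. Each time it argues that, if these are nonzero for infinitely many $n$, they are lowest-order terms of $Cz^n$ at strictly increasing degrees, and so give infinitely many independent vectors in $\mathrm{Ran}\,C$.

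You apply the same ``distinct leading orders imply independence'' principle once and globally. Let $N$ be a common polyanalytic order for $f$ and $g$. Since $\mathrm{ord}(Cz^n)\ge (n-3N)/4$, and a $p$-dimensional space of power series realizes at most $p$ orders of vanishing, finite rank forces $Cz^n=0$ for all large $n$. The proofs of Lemma~\ref{lem:ind} and Theorem~\ref{thm:poly} only read coefficients of $Cz^n$ for large $n$, so they then transfer verbatim. This includes the pair $(h,g)$, since $[B_h,B_g]=C$.

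What each route buys:
\begin{itemize}
\item Your route needs only a crude lower bound on the order of $Cz^n$, not an exact identification of its leading coefficient. It makes the corollary a black-box consequence of the commuting argument, and it yields the stronger intermediate fact that $C$ annihilates $z^n$ for all $n\ge n_0$.
\item The paper's route avoids this separate reduction step, but has to recheck at each stage that the coefficient used really is the lowest-order one.
\end{itemize}

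A few things to tidy up:
\begin{itemize}
\item The bound $\mathrm{ord}(Cz^n)\ge(n-3N)/4$ must pass through the infinite series $B_gz^n$. This follows from boundedness of $B_f$, continuity of $h\mapsto(h)_k$, and the fact from \eqref{eq:monomial} that $(B_fz^m)_k=0$ whenever $m>2k+N$.
\item The cases $f=0$ or $g=0$ should be dispatched before invoking Lemma~\ref{lem:ind}.
\item The unfinished sentence ``Choose $M$ such that\ldots'' should be deleted.
\end{itemize}
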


\begin{proof}
By Theorem~\ref{thm:poly}, it only needs to be shown that  $(1)\Rightarrow (3)$.
We use the obvious fact  that if  $\phi_j (z)~(j \ge 1)$ is analytic and
\[
 \phi_j(z)=c_jz^{m_j}+\sum_{k=m_{j+1}}^\infty c_{j,k}z^k,
 \qquad c_j\neq0,
 \qquad m_1<m_2<\cdots,
\]
then they are linearly independent. 

Suppose that $C =[B_f,B_g]$ has finite rank.  Assume  $f$ and $g$ are nonzero, and write
$r=r(f)$ and $s=r(g)$. We claim that $r=s$. Otherwise, without loss of generality, assume  $r< s$.
For all
sufficiently large $n$ satisfying
$n+2r+s\equiv0\pmod4$, the proof of
Lemma~\ref{lem:ind} shows that $Cz^n$ has lowest
degree term
\[
 R_{g,s}(n)R_{f,r}\left(\frac{n+s}{2}\right) z^{m_n}, \qquad m_n=\frac{n+2r+s}{4}.
\]
As $n$ increases by $4$ in the above equation, $m_n$ increases by $1$. Combining the  above fact gives infinitely many linearly independent vectors in
$\mathrm{Ran}~C$, which is  a contradiction. Hence $r=s$.

Write  $F=R_{f,r}$ and $G=R_{g,r}$, and define the rational
function
\[
 Q(x)=G(x)F\left(\frac{x+r}{2}\right)
       -F(x)G\left(\frac{x+r}{2}\right).
\]
If $Q\not\equiv0$, then $Q(n)\neq0$ for every sufficiently large
integer $n$. For $n+3r\equiv0\pmod4$,  the coefficient of this
degree in $Cz^n$ is exactly $Q(n)$. These degrees again increase by
$1$ as $n$ increases by $4$. This leads to a contradiction with the assumption that $C$ has finite rank. We conclude that $Q\equiv0$.
As in the proof of Theorem~\ref{thm:poly}, we have $F=\lambda G$ and  $\lambda \neq 0$. Let $h=f-\lambda g$. Then
\[
 [B_h,B_g]=[B_f,B_g]=C.
\]
If $h\neq0$,  the lowest-order degree term  gives $r(h)>r(g)$. The preceding proof shows that this is impossible.
Thus $h=0$, and $f=\lambda g$ as desired.
\end{proof}

The polyanalytic theorem contains the nonconstant-polynomial case of Wang and Zhao~\cite[Theorem~3.1]{WangZhao}.

\begin{corollary}\label{cor:wz}
Let $p$ be a nonconstant analytic polynomial and let $\varphi,\psi\in H^\infty(\D)$. Then $[B_{\overline p+\varphi},B_\psi]=0$ if and only if $\psi=0$.
\end{corollary}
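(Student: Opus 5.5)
The plan is to deduce Corollary~\ref{cor:wz} directly from Theorem~\ref{thm:poly}. First, both symbols lie in $\Poly$. Since $p$ is an analytic polynomial, $\overline p=\sum_{j=0}^{d}\overline{c_j}\,\overline z^{\,j}$, where $d=\deg p\ge1$. Hence
\[
f:=\overline p+\varphi=(\varphi+\overline{c_0})+\sum_{j=1}^{d}\overline z^{\,j}\,\overline{c_j}.
\]
This is bounded and has the form \eqref{eq:poly-class} with $N=d$, $h_0=\varphi+\overline{c_0}$ and constant $h_j=\overline{c_j}$ for $j\ge1$. Also $\psi\in H^\infty(\D)\subset\Poly$ with $N=0$.

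For sufficiency: if $\psi=0$ then $B_\psi=0$, so the commutator vanishes trivially.

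For necessity, assume $[B_f,B_\psi]=0$. Theorem~\ref{thm:poly} gives $(\alpha,\beta)\neq(0,0)$ with $\alpha f+\beta\psi=0$. We argue by contradiction: suppose $\psi\neq0$. If $\alpha=0$, then $\beta\ne0$ and so $\psi=0$, which is excluded. Hence $\alpha\neq0$, and $f=-(\beta/\alpha)\psi$ is holomorphic on $\D$. Then $\overline p=f-\varphi$ would be holomorphic, hence $\overline{\partial}\,\overline p=\overline{p'}\equiv0$. This forces $p'\equiv0$, i.e.\ $p$ is constant, a contradiction. Therefore $\psi=0$.

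The only point needing care is making sure the coefficient-uniqueness underlying $\Poly$ does not clash with a holomorphic $f$. One could instead use \eqref{eq:injective} to see that the $\overline z^{\,d}$ component $\overline{c_d}\neq0$ cannot be cancelled by the purely analytic $-(\beta/\alpha)\psi$. The direct $\overline\partial$ argument avoids this and is the cleanest route, so no real obstacle is expected.
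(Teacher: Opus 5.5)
Your proposal is correct and takes the paper's route. The paper gives no separate proof and presents this corollary as an immediate specialization of Theorem~\ref{thm:poly}; your checks that $\overline p+\varphi$ and $\psi$ lie in $\Poly$, and that linear dependence with $\psi\neq0$ would make $\overline p$ holomorphic and hence $p$ constant, supply exactly the details the paper leaves implicit.
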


The same reasoning gives the common-polynomial-part conclusion discussed in~\cite{Zhang}.
\begin{corollary}\label{cor:common-part}
Let $p$ be a nonconstant analytic polynomial and let $\varphi,\psi\in H^\infty(\D)$. Then $[B_{\overline p+\varphi},B_{\overline p+\psi}]=0$ if and only if $\varphi=\psi$.
\end{corollary}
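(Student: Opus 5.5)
The plan is to reduce Corollary~\ref{cor:common-part} to Theorem~\ref{thm:poly} and then exploit the fact that $\overline p$ is a genuinely antianalytic piece that cannot be absorbed by an analytic function. Sufficiency is immediate: if $\varphi=\psi$ the two symbols coincide, so the operators commute trivially. For necessity, first observe that both symbols $f=\overline p+\varphi$ and $g=\overline p+\psi$ lie in $\Poly$. Indeed, writing $p(z)=\sum_{j=0}^N c_jz^j$ with $N=\deg p\ge1$ and $c_N\ne0$, we have $\overline p=\sum_j \overline{c_j}\,\overline z^{\,j}$, so $f=\overline z^{0}(\overline{c_0}+\varphi)+\sum_{j=1}^N\overline z^{\,j}\,\overline{c_j}$. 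Here every component is holomorphic and $f$ is bounded because $p$ is a polynomial and $\varphi\in H^\infty$. The same holds for $g$.

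Assuming $[B_f,B_g]=0$, Theorem~\ref{thm:poly} yields $(\alpha,\beta)\neq(0,0)$ with $\alpha f+\beta g=0$, that is,
\[
(\alpha+\beta)\overline p=-(\alpha\varphi+\beta\psi).
\]
The right-hand side is holomorphic on $\D$. The left-hand side is antiholomorphic, so both sides must equal a constant. Since $p$ is nonconstant, $\overline p$ is not constant, and this forces $\alpha+\beta=0$. Then $\alpha\ne0$, because otherwise $\beta=0$ as well. Dividing by $\alpha$ gives $\varphi-\psi=0$, so $\varphi=\psi$.

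The only point needing care is the claim that an antiholomorphic function equal to a holomorphic one must be constant. I would justify it directly: applying $\partial/\partial z$ to $(\alpha+\beta)\overline p$ gives $0$, while applying it to $-(\alpha\varphi+\beta\psi)$ gives its complex derivative. Hence the right-hand side is constant, so $(\alpha+\beta)\overline{p'}\equiv0$. Since $p'\not\equiv0$, we get $\alpha+\beta=0$. Alternatively, one can compare coefficients of $\overline z^{\,N}$ in the polyanalytic representation, using the uniqueness furnished by \eqref{eq:injective}: the $\overline z^{\,N}$ component of $\alpha f+\beta g$ is the constant $(\alpha+\beta)\overline{c_N}$, and it must vanish. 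I do not expect a real obstacle here; the whole weight is carried by Theorem~\ref{thm:poly}.
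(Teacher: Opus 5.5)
Your proof is correct and follows the paper's route. The paper gives no separate proof of this corollary and treats it as a direct consequence of Theorem~\ref{thm:poly}; your argument supplies the step left implicit there. Applying $\partial/\partial\overline z$ to $(\alpha+\beta)\overline p=-(\alpha\varphi+\beta\psi)$ forces $\alpha+\beta=0$, hence $\alpha\neq0$ and $\varphi=\psi$.
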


The lowest-order  term  argument also applies to finite products. We retain the notation $r(f)$ and $R_{f,t}$ from Lemma \ref{lem:rf}.
\begin{proposition}\label{prop:products}
If $f_1,\ldots,f_s\in\Poly\setminus\{0\}$, where $s\ge1$, then
\[
 B_{f_1}\cdots B_{f_s}
\]
has infinite rank. In particular, $f_1,\ldots,f_s\in\Poly$ and \[ B_{f_1}\cdots B_{f_s} =0 \] if and only if there exists $1 \le j \le s$ such that $f_j \equiv 0$.
\end{proposition}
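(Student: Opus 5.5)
Let $r_i=r(f_i)$ and $F_i=R_{f_i,r_i}$, a nonzero rational function by Lemma~\ref{lem:rf}. I will track the lowest-order term of $B_{f_1}\cdots B_{f_s}z^n$, exactly as in the proof of Lemma~\ref{lem:ind}. By \eqref{eq:monomial}, every monomial $z^k$ that occurs in $B_{f_i}z^k$ has exponent $(k+t)/2$ with $t\ge r_i$. The lowest admissible exponent is $(k+r_i)/2$, and when $k+r_i$ is even its coefficient is $F_i(k)$.

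Define $n_s=n$ and $n_{i-1}=(n_i+r_i)/2$ for $i=s,s-1,\ldots,1$. This follows the order in which $B_{f_s}$ acts first. Then
\[
n_0=\frac{n+r_s+2r_{s-1}+\cdots+2^{s-1}r_1}{2^s}.
\]
I restrict to $n$ in a suitable residue class modulo $2^s$, so that every $n_i$ is an integer. Such a class exists: choose $n\equiv -r_s \pmod 2$, then fix $n$ modulo $4$ so that $n_{s-1}+r_{s-1}$ is even, and so on inductively.

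The key claim is that, for these $n$ and all $n$ large enough, the coefficient of $z^{n_0}$ in $B_{f_1}\cdots B_{f_s}z^n$ equals
\[
\prod_{i=1}^s F_i(n_i),
\]
and every other monomial has a strictly larger exponent. I will prove this by induction on $s$. Write $B_{f_s}z^n=\sum_k c_kz^k$, where every $k$ is at least $n_{s-1}$ and the term with $k=n_{s-1}$ has coefficient $F_s(n)$. Apply $B_{f_1}\cdots B_{f_{s-1}}$ termwise; this is justified because the operators are bounded and the series converges in $A^2(\D)$. The minimal output exponent is monotone in the input exponent, since $k\mapsto (k+t)/2$ is increasing. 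Hence only the $k=n_{s-1}$ term can reach exponent $n_0$, and it reaches it only through the minimal path.

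One technical point needs care: terms with $k>n_{s-1}$ could produce an exponent whose floor collapses onto $n_0$. This cannot happen, because an input $k\ge n_{s-1}+1$ yields output exponents at least $(n_{s-1}+1+r)/2$, which already exceeds $(n_{s-1}+r)/2$. Iterating this strict inequality through all the factors gives the claim. Each $F_i(n_i)$ is nonzero for large $n$, since $n_i\to\infty$ linearly in $n$ and $F_i$ has only finitely many zeros. Therefore the lowest term of the image of $z^n$ is a nonzero multiple of $z^{n_0}$.

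As $n$ runs through the residue class modulo $2^s$, consecutive values increase $n_0$ by exactly $1$. The images therefore have strictly increasing lowest degrees, and by the linear-independence fact used in the proof of Corollary~\ref{cor:finite-rank} they are linearly independent. So the range of $B_{f_1}\cdots B_{f_s}$ is infinite-dimensional. For the zero-product statement, if some $f_j\equiv0$ then $B_{f_j}=0$ and the product vanishes. Conversely, if all $f_j$ are nonzero the product has infinite rank, hence is nonzero. The main obstacle is the bookkeeping in the inductive lowest-term claim, namely the integrality and parity conditions and the strict monotonicity argument; the rest follows from Lemma~\ref{lem:rf} directly.
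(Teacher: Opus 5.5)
Your proposal is correct and follows essentially the same route as the paper: the recursion $n_{i-1}=(n_i+r_i)/2$ on a residue class modulo $2^s$ (the paper takes $n+\delta\equiv0\pmod{2^s}$), the lowest coefficient $\prod_{i}R_{f_i,r_i}(n_i)$, and lowest degrees that strictly increase with $n$, giving infinite rank. The only cosmetic difference is how minimality of $z^{n_0}$ is shown. You use monotone induction, whereas the paper writes the final degree as $\bigl(n+\sum_j 2^{s-j}t_j\bigr)/2^s$ and observes that its excess over $n_0$ is $\sum_j 2^{-j}(t_j-r_j)\ge0$.
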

\begin{proof}
Set $r_j=r(f_j)$ and
\[
 \delta=2^{s-1}r_1+2^{s-2}r_2+\cdots+r_s.
\]
For sufficiently large nonnegative integers $n$ with $n+\delta\equiv0\pmod{2^s}$. Beginning with $n_s=n$, define
\[
 n_{j-1}=\frac{n_j+r_j}{2},\qquad j=s,s-1,\ldots,1.
\]
 Iterating the recurrence gives
\[
 n_j=\frac{n+\sum_{k=j+1}^{s}2^{s-k}r_k}{2^{s-j}},
 \qquad 0\le j\le s,
\]
where an empty sum is zero.
It is easy to check $n_j \in \Z$ for $0\le j\le s$.
When  $n$ is sufficiently large, $n_j \ge 0$ and
\[
 n_0=\frac{n+\delta}{2^s}.
\]
For $t_j \ge r_j$, $ B_{f_1}\cdots B_{f_s}z^n$ has related final degree
\[
 \frac{n+2^{s-1}t_1+2^{s-2}t_2+\cdots+t_s}{2^s}.
\]
Its difference from $n_0$ is a sum of nonnegative terms and is zero only when $t_j=r_j$ for every $j$. Consequently, the coefficient of the lowest order possible monomial $z^{n_0}$ is exactly
\[
 \prod_{j=1}^{s}R_{f_j,r_j}(n_j),
\]
and noting that  $n_j$ is a nonconstant affine function of $n$ with positive slope. Since every $R_{f_j,r_j}$ is a nonzero rational function, this product is a nonzero rational function of $n$ and is nonzero for all sufficiently large admissible $n$. Fixed output coefficients involve only finitely many intermediate orders, as  shown in Lemma~\ref{lem:rf}, so the preceding coefficient comparison is valid.
As $n$ increases by $2^s$, $ B_{f_1}\cdots B_{f_s}z^n$ produces a new nonzero lowest-order term. Thus the range is infinite dimensional.
It follows that the zero product problem for slant Toeplitz operators has only the trivial solution.
\end{proof}

\section{Harmonic symbols with Sobolev boundary values}\label{sec:har}

For commuting slant Toeplitz operators with polyanalytic symbols, one may prove the conclusion via the lowest-order term of the analytic function obtained by letting the operators act on $z^n~(n \in \mathbb{N})$. This approach fails for harmonic functions, since their coanalytic components may contain infinitely many monomial terms. Nevertheless, the same conclusion can still be derived by combining the foregoing method with limit-taking arguments.

The following is the main conclusion of this paper.
\begin{theorem}\label{thm:har}
Let $f,g$ be bounded harmonic functions on $\D$, and suppose that their radial boundary  functions $F,G$ belong to $W^{1,2}(\T)$.
Then  $B_fB_g=B_gB_f$ if and only if  $ \alpha f+\beta g=0$ for some $|\alpha|+|\beta| >0$, $ \alpha, \beta \in \C$.
\end{theorem}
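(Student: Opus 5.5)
We follow the polyanalytic argument, replacing the lowest-order term with a limiting procedure. Write $f=\sum_{k\ge0}a_kz^k+\sum_{k\ge1}b_k\overline z^{k}$, and similarly $g$ with $c_k,d_k$. The Fourier coefficients of $F$ are $a_k$ ($k\ge0$) and $b_{|k|}$ ($k<0$), and $F\in W^{1,2}(\T)$ means $\sum k^2(|a_k|^2+|b_k|^2)<\infty$. As in Lemma~\ref{lem:rf}, a direct computation gives $T_fz^n=\sum_{k\ge0}a_kz^{n+k}+\sum_{k=1}^{n}b_k\frac{n-k+1}{n+1}z^{n-k}$. Hence
\[
B_fz^n=\sum_{t\ge -n,\ n+t \text{ even}}\rho_{f,t}(n)\,z^{(n+t)/2},
\]
where $\rho_{f,t}(n)=a_t$ for $t\ge0$ and $\rho_{f,t}(n)=b_{-t}\bigl(1+\tfrac{t}{n+1}\bigr)$ for $t<0$. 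Sufficiency is trivial, since $B_{\alpha f}=\alpha B_f$.

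For necessity, we compare the coefficient of $z^m$ in $B_fB_gz^n$ and $B_gB_fz^n$, namely
\[
\sum_{t+2u=2m\cdot 2-n\ \text{suitably}}\rho_{g,t}(n)\,\rho_{f,u}\!\left(\tfrac{n+t}{2}\right)
\]
and its counterpart with $f,g$ interchanged. We keep $n$ and $m$ large with the \emph{shift} $\sigma=4m-n$ fixed, so the admissible pairs $(t,u)$ satisfy $t+2u=\sigma$. As $n\to\infty$, each factor converges: $\rho_{f,t}(n)\to\hat F(t)$. If we can interchange the limit with the infinite sum over $t$ (with $u=(\sigma-t)/2$), the commutation identity yields, for every $\sigma$,
\begin{equation}
\sum_{t+2u=\sigma}\hat G(t)\hat F(u)=\sum_{t+2u=\sigma}\hat F(t)\hat G(u).\tag{$\ast$}
\end{equation}
This is exactly the Fourier-side statement that $F(\zeta^2)G(\zeta)=G(\zeta^2)F(\zeta)$ on $\T$, that is, $\Phi(\zeta)=F(\zeta)G(\zeta^2)-G(\zeta)F(\zeta^2)=0$. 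Justifying the interchange is where $W^{1,2}$ enters. The correction $\rho_{f,t}(n)-\hat F(t)$ equals $b_{|t|}\,t/(n+1)$. The sum $\sum_t|t||\hat F(t)||\hat G(u)|$ over $t+2u=\sigma$ is bounded by Cauchy--Schwarz using $\sum t^2|\hat F(t)|^2<\infty$ and $\hat G\in\ell^2$, so the error is $O(1/n)$. The truncation $t\ge-n$ also contributes only tails tending to zero. I expect this uniform control, dominated convergence in the doubly infinite sum, to be the main technical obstacle. In particular, care is needed because the second factor is evaluated at $(n+t)/2$, which is small when $t\approx -n$. Those terms have $|\hat G(t)|$ small with weight $|t|\sim n$, and the $W^{1,2}$ bound $\sum_{|t|>n/2}|\hat G(t)|\,|t|/n\cdot|\hat F(u)|\to0$ handles them.

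It remains to solve $F(\zeta)G(\zeta^2)=G(\zeta)F(\zeta^2)$, with $F,G\in W^{1,2}\subset C(\T)$, and conclude linear dependence. The equation alone is too weak (e.g.\ $G=F^2$ solves $F(\zeta)F(\zeta^2)^2=F(\zeta)^2F(\zeta^2)$ only if $F(\zeta^2)=F(\zeta)$, so perhaps it is not so weak). We extract more information by varying the order in $n$: we keep the $1/n$ correction terms instead of discarding them. This mimics the polyanalytic proof, in which the rational dependence on $n$ (Lemma~\ref{lem:rational-dila}) forced proportionality. Concretely, the $1/n$ coefficient of the commutator identity gives a second relation involving $\zeta F'(\zeta)$-type terms, namely derivatives arising from the multipliers $t\hat F(t)$, which are available since $F'\in L^2$.

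Alternatively, on the open set where $G\neq0$, set $\Psi=F/G$, so that $\Psi(\zeta^2)=\Psi(\zeta)$. Iterating gives $\Psi(\zeta)=\Psi(\zeta^{2^k})$. Using continuity and the first-order relation, we want to show $\Psi$ is locally constant, hence constant, and then $F=\lambda G$ with $\lambda\ne0$. We then finish as in Theorem~\ref{thm:poly}. Set $h=f-\lambda g$; then $[B_h,B_g]=0$, and we rerun the argument to force $h=0$, handling separately the degenerate cases where $f$ or $g$ vanishes or the zero set of $G$ is large.
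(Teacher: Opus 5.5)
\paragraph{Verdict.} There is a genuine gap: the proposal stops after the leading identity, and the step that would close the argument (the common zero set of $F$ and $G$) is deferred rather than handled.

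\paragraph{What works, and where it stops.} Your limiting argument for the leading identity is fine. With $\sigma=4m-n$ fixed, the corrections are $O(1/n)$ by Cauchy--Schwarz and the truncation tail tends to zero. So you do obtain $F(\zeta^2)G(\zeta)=G(\zeta^2)F(\zeta)$, which is the paper's \eqref{eq:har-0}. Your worry about $(n+t)/2$ being small is unfounded: for $t<\sigma$ one has $u>0$, so the second factor carries no correction.

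Everything after that is not yet a proof, and the gap is exactly where the theorem is hard. Let $E=\{F=G=0\}$.
\begin{itemize}
\item If $E$ is null, the leading identity suffices. Then $v=(F,G)$ is nonzero a.e., and the projection onto $\C v(\zeta)$ is invariant under $\zeta\mapsto\zeta^2$. Ergodicity of this doubling map gives a fixed line (Lemma~\ref{lem:har-invariant-direction}).
\item If $E$ has positive measure, Example~\ref{ex:common-zero} shows that the leading identity can hold identically for linearly independent $F,G$.
\item Your $\Psi=F/G$ route cannot see this. The relation $\Psi(\zeta^2)=\Psi(\zeta)$ holds only where $G(\zeta)G(\zeta^2)\ne0$. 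Local constancy does not follow from it; the right tool is ergodicity, which needs the relation a.e.\ on all of $\T$. It also says nothing about $F$ on $\{G=0\}$. So ``handling separately the degenerate cases where the zero set of $G$ is large'' defers the entire difficulty.
\item The step ``set $h=f-\lambda g$ and rerun'' has no content here. In Theorem~\ref{thm:poly} it works because of the strict increase $r(h)>r$, and there is no such index for harmonic symbols. Rerunning on $(h,g)$ lands you in precisely the case where the boundary function of $h$ vanishes on a large set. Once $\alpha F+\beta G=0$ a.e.\ is known, the Poisson integral finishes directly.
\end{itemize}

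\paragraph{What is missing.} You need concrete use of the lower-order terms in $1/N$. The weights act only on negative frequencies, so what appears is $u=(D_-F,D_-G)$, not $\zeta F'$. Expanding the exact weight $\omega_N$ to order $N^{-2}$ gives \eqref{eq:har-1} and \eqref{eq:har-2}.
\begin{itemize}
\item The first-order identity handles points where exactly one of $\zeta,\zeta^2$ lies in $E$. When both lie in $E$ it reads $0=0$.
\item At those points the second-order identity is needed. It is usable under $W^{1,2}$ only because of the cancellation \eqref{eq:harmonic-algebraic-cancellation}, which removes the second derivatives a naive $N^{-2}$ expansion produces.
\item Combined with $Dv=0$ a.e.\ on $E$ (Lemma~\ref{lem:har-ac-zero-set}), it reduces there to $u(\zeta^2)\wedge u(\zeta)=0$.
\item This lets you replace $v$ by $u$ on $E$. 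The field $V=v$ on $E^c$, $V=u$ on $E$ then satisfies $V(\zeta^2)\wedge V(\zeta)=0$ a.e.
\item You must also show $V\ne0$ a.e. This is the F.\ and M.\ Riesz argument on the coanalytic part: $F=D_-F=0$ on a set of positive measure forces $f=0$ (Lemma~\ref{lem:har-boundary-uniqueness}).
\item Only then does the ergodic step give a fixed line containing $V$, hence $\alpha F+\beta G=0$ a.e.
\end{itemize}
None of these ingredients appears in your proposal: the explicit first- and second-order identities, the substitution of $D_-v$ for $v$ on the common zero set, boundary uniqueness, and ergodicity of the doubling map. Without them the argument does not close.
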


We first fix some notation.

For $H\in L^1(\T, d\theta)$, the $j$th Fourier coefficient of $H$ is defined as
\[
 \widehat H(j)=\int_0^{2\pi} H(e^{\mathrm{i}\theta}){e^{-\mathrm{i}j\theta}}\,\frac{d\theta}{2\pi},
 \qquad j\in\mathbb Z.
\]
The Sobolev space $W^{1,2}(\mathbb{T})$ consists of the functions in $L^2(\mathbb{T},d\theta)$ satisfying
\[\sum_{j=-\infty}^{\infty}(1+j^2)|\widehat{H}(j)|^2<\infty.\]
That is, the first weak derivative of $H$ belongs to $L^2(\mathbb{T},d\theta)$.
For $H\in W^{1,2}(\T)$, set
\[
 DH=-\mathrm{i}\frac{dH}{d\theta},\qquad
 D_-H(e^{\mathrm{i}\theta})=\sum_{j=-\infty}^{-1} j\widehat H(j)e^{\mathrm{i}j\theta}.
\]
Furthermore, for an integer $\alpha\ge1$ and
$H\in W^{\alpha,2}(\T)$, define
\[
 D^\alpha H(e^{\mathrm{i}\theta})
 =
 \lim_{i\to\infty}
 \sum_{k=-i}^{i}
 k^\alpha\widehat H(k)e^{\mathrm{i}k\theta}
 \quad\text{in }L^2(\T, d\theta).
\]
Equivalently,
\[
 D^\alpha H(e^{\mathrm{i}\theta})
 =
 (-\mathrm{i})^\alpha
 \frac{d^\alpha}{d\theta^\alpha}H(e^{\mathrm{i}\theta})
\]
in the sense of periodic distributions.
Thus, for $H\in W^{\beta,2}(\T)$,
\[
 D^\alpha H\in W^{\beta-\alpha,2}(\T)
 \qquad(0\le\alpha\le \beta),
\]
and
\[
 D^\alpha D_-H\in W^{\beta-\alpha-1,2}(\T)
 \qquad(0\le\alpha\le \beta-1).
\]
In counting total differential order, $D^\alpha H$
has order $\alpha$, whereas $D^\alpha D_-H$
has order $\alpha+1$.

For  $H \in L^1(\T, d\theta)$, let
\[H^{[2]}(e^{\mathrm{i}\theta})=H(e^{2\mathrm{i}\theta}).\]
The exterior product (wedge product) of $x=\begin{pmatrix}x_1\\x_2\end{pmatrix},y = \begin{pmatrix}y_1\\y_2\end{pmatrix}\in\mathbb C^2$ is denoted by
\[x\wedge y=x_1y_2-x_2y_1.\]

With the assumption that $f,g$ are bounded harmonic functions on $\D$, we write \[F(e^{\mathrm{i}\theta})=\lim\limits_{r \rightarrow 1^{-}} f(re^{\mathrm{i}\theta})=\sum_{j=-\infty}^{\infty}a_je^{\mathrm{i}j\theta},\qquad G(e^{\mathrm{i}\theta})=\lim\limits_{r \rightarrow 1^{-}} g(re^{\mathrm{i}\theta}) =\sum_{j=-\infty}^{\infty}b_je^{\mathrm{i}j\theta}. \]

As with the proof of the main theorem in the preceding section, direct computation yields
\begin{align*}
 \langle fz^n,z^\ell\rangle
 &=\int_0^1 r^{n+\ell+1}dr
       \int_0^{2\pi} f(re^{\mathrm{i}\theta} )e^{\mathrm{i}(n-\ell)\theta}\,\frac{d\theta }{\pi}\\
 &=2a_{\ell-n}\int_0^1r^{n+\ell+|\ell-n|+1}\,dr\\
 &=\frac{a_{\ell-n}}{\max\{n,\ell\}+1},\quad n, \ell\in \N.
\end{align*}
Therefore,
\[
 T_fz^n=\sum_{\ell=0}^\infty
     \min\left\{1,\frac{\ell+1}{n+1}\right\}a_{\ell-n}z^\ell.
\]
Applying $W$ gives
\begin{equation}\label{eq:har-matrix}
 B_fz^n=\sum_{m=0}^\infty w(m,n)a_{2m-n}z^m,
 \qquad
 w(m,n)=\min\left\{1,\frac{2m+1}{n+1}\right\}.
\end{equation}
Further, by $\left([B_f, B_g] z^n\right)_m =0$, we obtain
\begin{equation}\label{eq:har-commutator-matrix}
 \sum_{k=0}^\infty w(m,k)w(k,n)
 \bigl(a_{2m-k}b_{2k-n}-b_{2m-k}a_{2k-n}\bigr)=0
 \qquad(m,n\ge0).
\end{equation}
Fix $d\in\mathbb Z$.  In
\eqref{eq:har-commutator-matrix}, take $N=4m$, $n=N-d$, and set
$k=N/2-j$. Then
\[
 c_{d,j}:=a_jb_{d-2j}-b_ja_{d-2j},\,\text{and}\,\sum_{j=-\infty}^\infty c_{d,j}
\]
is exactly the $d-$ Fourier coefficient of $F(e^{\mathrm{i}2\theta})G(e^{\mathrm{i}\theta})-F(e^{\mathrm{i}\theta})G(e^{\mathrm{i}2\theta})$.
Noting that  $k\ge0$ implies $j\le N/2$.
  Set
\[
 A_j=\min\{j,0\},\qquad C_j=\min\{d-2j,0\},\qquad j, d \in \Z.
\]
From \eqref{eq:har-matrix} and \eqref{eq:har-commutator-matrix},  we conclude that
\begin{equation}\label{eq:har-weighted-identity}
 \sum_{j= -\infty}^\infty\omega_N(j)c_{d,j}=0
\end{equation}
with
\begin{equation}\label{eq:har-omega}
 \omega_N(j)=
 \begin{cases}
 \displaystyle
 \left(1+\frac{2A_j}{N+2-2j}\right)
 \left(1+\frac{C_j}{N+1-d}\right),&j\le N/2,\\[2mm]
 0,&j>N/2.
 \end{cases}
\end{equation}

To obtain the final conclusion, we introduce the following three identities about radial boundary functions $F, G$ from the matrix of the
commutator.  The second-order identity will be needed on the common zero set of $F$ and $G$.
\begin{lemma}\label{lem:har-identities}
Assume the hypotheses of Theorem~\ref{thm:har}, and let
\[
 v=\begin{pmatrix}F\\G\end{pmatrix},\qquad
 u=\begin{pmatrix}D_-F\\D_-G\end{pmatrix}.
\]
If $B_fB_g=B_gB_f$, then the following identities hold almost
everywhere on $\T$:
\begin{align}
 v^{[2]}\wedge v&=0,\label{eq:har-0}\\
 2u^{[2]}\wedge v+v^{[2]}\wedge u&=0,\label{eq:har-1}\\
 u^{[2]}\wedge(Dv+v-u)&=0.\label{eq:har-2}
\end{align}
\end{lemma}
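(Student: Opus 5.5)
The plan is to read the three identities off an asymptotic expansion of the weighted identity \eqref{eq:har-weighted-identity} in powers of $1/N$, as $N=4m\to\infty$ with $d$ fixed. Throughout I use the coefficients $c_{d,j}=a_jb_{d-2j}-b_ja_{d-2j}$. By the computation preceding the lemma, $\sum_j c_{d,j}$ is the $d$-th Fourier coefficient of $F^{[2]}G-G^{[2]}F=v^{[2]}\wedge v$. The orientation matters: in $a_jb_{d-2j}$ the factor $a_j$ comes from $F^{[2]}$.

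The first step is a bound on the weights. For $j\le N/2$, both factors of $\omega_N(j)$ in \eqref{eq:har-omega} lie in $[0,1]$. Indeed, $1+\frac{2A_j}{N+2-2j}=\frac{N+2}{N+2-2j}$ when $j<0$. If $d<2j\le N$, then $1+\frac{C_j}{N+1-d}=\frac{N+1-2j}{N+1-d}$. Hence $|\omega_N(j)|\le1$, and $\omega_N(j)\to1$ pointwise in $j$.

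By Cauchy--Schwarz, $\sum_j|c_{d,j}|\le 2\|a\|_{\ell^2}\|b\|_{\ell^2}$. Dominated convergence then gives $\sum_jc_{d,j}=0$ for every $d$, which is \eqref{eq:har-0}.

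Next I would subtract this relation, multiply by $N$, and use
\[
N\bigl(\omega_N(j)-1\bigr)\to 2A_j+C_j\qquad (j\le N/2).
\]
Two estimates are needed here.

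First, on $j\le N/2$ I want $N|\omega_N(j)-1|\le C(|A_j|+|C_j|)$. This follows because the denominators $N+2-2j$ and $N+1-d$ are at least of size $N$ wherever $A_j$ or $C_j$ is nonzero. Now $A_j$ multiplies $a_j$, resp. $b_j$, while $C_j$ multiplies $b_{d-2j}$, resp. $a_{d-2j}$. So the products $|A_j||c_{d,j}|$ and $|C_j||c_{d,j}|$ are summable by Cauchy--Schwarz, using $\{ka_k\},\{kb_k\}\in\ell^2$. This is exactly where $F,G\in W^{1,2}(\T)$ is used.

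Second, the tail $N\sum_{j>N/2}|c_{d,j}|$ tends to $0$. For $j>N/2$ we have $N<2j$, so this tail is at most $\sum_{j>N/2}2j|c_{d,j}|\to0$.

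Passing to the limit gives $\sum_j(2A_j+C_j)c_{d,j}=0$. Reading the Fourier coefficients, $2A_ja_j$ produces $2(D_-F)^{[2]}$ and $C_jb_{d-2j}$ produces $D_-G$. This is \eqref{eq:har-1}.

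For \eqref{eq:har-2} I would go one order further, subtracting the zeroth- and first-order relations and multiplying by $N^2$. Expanding the factors,
\[
\frac{2A_j}{N+2-2j}=\frac{2A_j}{N}+\frac{2A_j(2j-2)}{N^2}+\cdots,\qquad \frac{C_j}{N+1-d}=\frac{C_j}{N}+\frac{C_j(d-1)}{N^2}+\cdots,
\]
with the cross term $\frac{2A_jC_j}{N^2}$. The apparent obstacle is that $A_j\,j\,a_j$ involves $j^2a_j$, which is not controlled by $W^{1,2}$. The remedy is algebraic: for fixed $d$, write
\[
2j-2=-(d-2j)+(d-2)
\]
so that every second-order weight becomes a product of one factor attached to the $F^{[2]}$-index and one attached to the other index. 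The resulting factors are $2A_j\cdot(d-2j)$, $2A_j\cdot C_j$ and $2A_j\cdot\text{const}$. Each such product is summable against $c_{d,j}$, and in Fourier language each term carries $u^{[2]}$ on the doubled side. The terms $(d-2j)$, $1$ and $C_j$ on the other side translate into $Dv$, $v$ and $u$. The constants involving $d$ or $d-1$ are removed using \eqref{eq:har-0} and \eqref{eq:har-1}, since $d\cdot\widehat{\,\cdot\,}(d)$ corresponds to applying $D$ to those identities. After that cleanup, what remains should be exactly $u^{[2]}\wedge(Dv+v-u)$. I will verify the signs and the factor $2$ against the stated form.

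I expect the main obstacle to be justifying the $N^2$-limit under only $W^{1,2}$ regularity. The tail is manageable: for $j>N/2$ one has $|d-2j|\asymp j$ and $N^2\lesssim j|d-2j|$, so $N^2\sum_{j>N/2}|c_{d,j}|$ is dominated by a tail of the summable series $\sum|ja_j||(d-2j)b_{d-2j}|$. For the remainder I would avoid Taylor bounds and write it exactly, as
\[
\omega_N-1-\frac{\text{first}}{N}-\frac{\text{second}}{N^2}=\frac{1}{N^2}\times\Bigl(\text{product-type quantity}\Bigr)\times\frac{O(|j|+|d|)}{N+2-2j}.
\]
The last factor is bounded and tends to $0$ pointwise, so dominated convergence applies. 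Finally, the identities on Fourier coefficients for all $d$ become identities of $L^1$ functions, hence hold a.e.
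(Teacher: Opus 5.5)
Your proposal is correct and follows essentially the paper's proof. The paper also expands \eqref{eq:har-weighted-identity} to second order in $1/N$ and passes to the limit by dominated convergence. It then subtracts $(d-1)$ times the first-order identity, which is exactly your rewriting; this is the cancellation $Q_d(j)-(d-1)L_d(j)=-2A_j(d-2j+1-C_j)$ in \eqref{eq:harmonic-algebraic-cancellation}.

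The only difference is bookkeeping. Where you dominate term by term with ``product-type'' weights, the paper bounds the whole second-order remainder uniformly by $M_d(1+j^2)$ and uses $\sum_j(1+j^2)|c_{d,j}|<\infty$, obtained from $|d-2j|\ge|j|$ for large $|j|$. That is the same device of putting one factor of $j$ on each coefficient, and it is what resolves the $j^2a_j$ issue you flag.
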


\begin{proof}
From the hypotheses, we obtain
\begin{equation}\label{eq:har-weighted-l2}
 \sum_{j=-\infty}^\infty(1+j^2)|a_j|^2<\infty,
 \qquad
\sum_{j=-\infty}^\infty(1+j^2)|b_j|^2<\infty.
\end{equation}
Using Cauchy--Schwarz  gives
\[
 \sum_{j=-\infty}^\infty|a_j|
 \le
 \left(\sum_{j=-\infty}^\infty(1+j^2)|a_j|^2\right)^{1/2}
 \left(\sum_{j=-\infty}^\infty\frac1{1+j^2}\right)^{1/2}<\infty,
\]
and
\begin{equation}\label{eq:har-weighted-l1}
 \sum_{j=-\infty}^\infty(|a_j|+|b_j|)<\infty.
\end{equation}
Thus,
\begin{equation}\label{eq:har-weak-derivative}
 v\in W^{1,2}(\T;\mathbb{C}^2)\subset C(\T;\mathbb{C}^2),
 \qquad u,Dv\in L^2(\T;\mathbb C^2).
\end{equation}

Define
\begin{align*}
 H_0&=v^{[2]}\wedge v,\\
 H_1&=2u^{[2]}\wedge v+v^{[2]}\wedge u,\\
 J&=u^{[2]}\wedge(Dv+v-u).
\end{align*}
  Direct calculation yields
\begin{equation*}
 H_0=\sum_{d=-\infty}^\infty\left[ \sum_{j=-\infty}^\infty \left(a_jb_{d-2j}-b_ja_{d-2j}\right)\right]e^{\mathrm{i}d\theta}.
\end{equation*}
Similarly, some computation gives for $ d \in \Z$
\begin{align*}
 \widehat H_1(d)&=\sum_{j=-\infty}^\infty(2A_j+C_j)c_{d,j},\\
 \widehat J(d)&=\sum_{j=-\infty}^\infty A_j(d-2j+1-C_j)c_{d,j}.
\end{align*}

Noting that \[
 \sum_{k=0}^\infty|a_{2m-k}b_{2k-n}|
 \le
 \left(\sum_{k=0}^\infty|a_{2m-k}|^2\right)^{1/2}
 \left(\sum_{k=0}^\infty|b_{2k-n}|^2\right)^{1/2}
 \le\|\{a_k\}\|_{\ell^2}\|  \{b_k\}  \|_{\ell^2} < \infty.
\]
Set
$J_d=\max(1,|d|)$.  If $|j|\ge J_d$ and $k=d-2j$, then
\[
 |k|\ge2|j|-|d|\ge|j|.
\]
Combining with   Cauchy--Schwarz yields
\begin{align*}
 \sum_{|j|\ge J_d} j^2|a_jb_{d-2j}|
 &\le\sum_{|j|\ge J_d} |j a_j|\,|(d-2j)b_{d-2j}|\\
 &\le
 \left(\sum_{j=-\infty}^\infty j^2|a_j|^2\right)^{1/2}
 \left(\sum_{j=-\infty}^\infty(d-2j)^2|b_{d-2j}|^2\right)^{1/2}\\
 &\le
 \left(\sum_{j=-\infty}^\infty j^2|a_j|^2\right)^{1/2}
 \left(\sum_{k=-\infty}^\infty k^2|b_k|^2\right)^{1/2}<\infty.
\end{align*}
Consequently,
\begin{equation}\label{eq:har-con-domination}
 \sum_{j=-\infty}^\infty(1+j^2)|c_{d,j}|<\infty.
\end{equation}

Let
\[
 L_d(j)=2A_j+C_j,
 \qquad
 R_N(j)=N^2\left(\omega_N(j)-1-\frac{L_d(j)}N\right).
\]
For $j\le N/2$, \eqref{eq:har-omega} implies
\begin{equation}\label{eq:har-remainder}
R_N(j)=\frac{4A_j(j-1)N}{N+2-2j}
       +\frac{(d-1)C_jN}{N+1-d}
       +\frac{2A_jC_jN^2}{(N+2-2j)(N+1-d)}.
 \end{equation}
Suppose $N\ge2(|d|+1)$.  Then $N+1-d\ge N/2$.  If $A_j\ne0$,
we have $j<0$ and hence $N+2-2j\ge N$; if $A_j=0$, the first and
third terms of \eqref{eq:har-remainder} vanish.  Thus, there is $M_d \ge 0$ such that
\[
 |R_N(j)|
 \le4|A_j||j-1|+2|d-1||C_j|+4|A_jC_j|
 \le M_d(1+j^2)\qquad(j\le N/2),
\]
with $|A_j|\le|j|$ and $|C_j|\le|d|+2|j|$.
For $j>N/2$, we have $A_j=0$ and $C_j=d-2j$, and therefore
\[
 R_N(j)=-N^2-Nd+2Nj.
\]
Since $N<2j$,
\[
 |R_N(j)|\le N^2+N|d|+2Nj
 \le8j^2+2|d|j.
\]
After increasing $M_d$, we conclude that
\begin{equation}\label{eq:har-uniform-remainder}
 |R_N(j)|\le M_d(1+j^2)
 \quad(j\in\mathbb Z,\ N\ge2(|d|+1)).
\end{equation}
For each fixed $j$, formula \eqref{eq:har-remainder} also yields
\begin{equation}\label{eq:har-quadratic-weight}
\lim\limits_{N \to \infty} R_N(j)= Q_d(j), \quad
Q_d(j) :=4A_j(j-1)+(d-1)C_j+2A_jC_j.
\end{equation}
The definition of $R_N(j)$ imples $\omega_N(j)=1+N^{-1}L_d(j)+N^{-2}R_N(j)$ for every $j$, in particular for $j>N/2$.  Thus
\eqref{eq:har-weighted-identity} can be written as
\begin{equation}\label{eq:har-three-orders}
 0=S_0(d)+\frac{S_1(d)}N+\frac{E_N(d)}{N^2},
\end{equation}
where
\[
 S_0(d)=\sum_{j=-\infty}^\infty c_{d,j},\qquad
 S_1(d)=\sum_{j=-\infty}^\infty L_d(j)c_{d,j},\qquad
 E_N(d)=\sum_{j=-\infty}^\infty R_N(j)c_{d,j}.
\]
Noting that  \eqref{eq:har-con-domination} and \eqref{eq:har-uniform-remainder} show that $E_N(d)$ is uniformly bounded.
In the following part, we let   $N$ tend to infinity through multiples of $4$; in particular, $n\ge0$ for all sufficiently large $N$.
Then
\eqref{eq:har-three-orders} gives $S_0(d)=0$, and Identity \eqref{eq:har-0} follows.  Multiplication
by $N$ and letting  $N\to\infty$  in \eqref{eq:har-three-orders}  gives $S_1(d)=0$, and Identity \eqref{eq:har-1} follows.  It follows that $E_N(d)=0$ for all sufficiently large $N$.  Dominated convergence, combining with
\eqref{eq:har-con-domination},
\eqref{eq:har-uniform-remainder}, and
\eqref{eq:har-quadratic-weight}, shows that
\begin{equation}\label{eq:har-coefficient-identities}
\sum_{j=-\infty}^\infty L_d(j)c_{d,j}=0,\qquad
 \sum_{j=-\infty}^\infty Q_d(j)c_{d,j}=0
 \quad(d\in\mathbb Z).
\end{equation}

Write $k=d-2j$,
so that $C_j=\min\{k,0\}$.  A direct calculation gives
\begin{align}
 Q_d(j)-(d-1)L_d(j)
 &=4A_j(j-1)+(d-1)C_j+2A_jC_j\notag\\
 &\hspace{5mm}-(d-1)(2A_j+C_j)\notag\\
 &=2A_j(2j-d-1+C_j)\notag\\
 &=-2A_j(k+1-C_j).
 \label{eq:harmonic-algebraic-cancellation}
\end{align}
The coefficient sums are absolutely convergent by
\eqref{eq:har-con-domination}; thus subtracting
$(d-1)$ times the first equality in
\eqref{eq:har-coefficient-identities} from the second gives
\begin{equation}\label{eq:har-reduced-coeff}
 \sum_{j=-\infty}^\infty A_j\bigl(d-2j+1-C_j\bigr)c_{d,j}=0
 \qquad(d\in\mathbb Z).
\end{equation}
Then Identity \eqref{eq:har-2} holds
\end{proof}

We denote the classical Hardy space by $H^2$.
F. and M. Riesz theorem \cite{Douglas}  indicates that if $h\in H^2$ has radial boundary value zero
on a set of positive measure,  then $h=0$.

\begin{lemma}\label{lem:har-boundary-uniqueness}
Let $f$ be a nonzero bounded harmonic function on $\D$  whose radial boundary function $F \in W^{1,2}(\T)$.  Then the measure of the following set is zero:
\[
\{e^{\mathrm{i}\theta}\in\T: F(e^{\mathrm{i}\theta})=D_-F(e^{\mathrm{i}\theta})=0\}.
\]
\end{lemma}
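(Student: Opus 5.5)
Write $F=F_++F_-$, where $F_+=\sum_{j\ge0}a_je^{\mathrm{i}j\theta}$ is the analytic part and $F_-=\sum_{j\le-1}a_je^{\mathrm{i}j\theta}$ the coanalytic part. Both lie in $W^{1,2}(\T)$, and their Fourier series converge absolutely by \eqref{eq:har-weighted-l1}, so they are continuous functions. By definition
\[
D_-F=\sum_{j\le -1} j a_j e^{\mathrm{i}j\theta}=D F_-=-\mathrm{i}\frac{d}{d\theta}F_-.
\]
On the set $E=\{F=D_-F=0\}$ we therefore have $F_+=-F_-$ and $\frac{d}{d\theta}F_-=0$. The strategy is to convert these into statements about boundary values of $H^2$ functions and then apply the F.\ and M.\ Riesz theorem quoted just before the lemma.

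\textbf{Step 1.} Set $\psi(e^{\mathrm{i}\theta})=\overline{D_-F(e^{\mathrm{i}\theta})}=\sum_{j\ge1} j\,\overline{a_{-j}}\,e^{\mathrm{i}j\theta}$. Since $F\in W^{1,2}$ gives $\sum j^2|a_{-j}|^2<\infty$, $\psi$ is the boundary function of an $H^2$ function. It vanishes on $E$.

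\textbf{Step 2.} Suppose $|E|>0$. Then $\psi=0$ on a set of positive measure, so by the Riesz theorem $\psi\equiv0$. Hence $a_j=0$ for all $j\le -1$ and $F_-\equiv0$.

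\textbf{Step 3.} Now $F=F_+$ is the boundary function of the $H^2$ function $\sum_{j\ge0}a_jz^j$, and it vanishes on $E$, a set of positive measure. A second application of the Riesz theorem gives $F\equiv0$.

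\textbf{Step 4.} Since $f$ is the bounded harmonic extension (Poisson integral) of $F$, we get $f\equiv0$, contradicting $f\neq0$. Hence $|E|=0$.

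There is no serious obstacle here. The only point needing care is identifying the radial limits. $D_-F\in L^2$ by \eqref{eq:har-weak-derivative}, so $\psi\in L^2$ with vanishing negative Fourier coefficients; hence $\psi$ is a genuine $H^2$ boundary function, and its radial limits agree a.e.\ with the $L^2$ function. The same reasoning identifies $F_+$ with the boundary values of the analytic part of $f$. With these identifications the Riesz theorem applies as stated.
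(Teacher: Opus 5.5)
Your proof is correct and is essentially the paper's argument: the paper writes $f=p+\overline q$ with $q(z)=\sum_{n\ge1}\overline{a_{-n}}z^n$ and notes $D_-F=-\overline{e^{\mathrm{i}\theta}q'^*}$. It then uses the F.\ and M.\ Riesz theorem to get $q'=0$ (hence $q=0$) and applies the same theorem to $p$, exactly as in your Steps 1--3. The only slip is a sign, since actually $\overline{D_-F}=-\sum_{j\ge1}j\,\overline{a_{-j}}\,e^{\mathrm{i}j\theta}$ (i.e.\ $\psi=-e^{\mathrm{i}\theta}q'^*$); this is harmless because only the zero set of $\psi$ matters.
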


\begin{proof}
Write
\[
 f=p+\overline q,\qquad
 p(z)=\sum_{n=0}^\infty a_nz^n,\qquad
 q(z)=\sum_{n=1}^\infty \overline{a_{-n}}z^n.
\]
Using \eqref{eq:har-weighted-l1} gives
$p,q\in A(\D)$, where $A(\D)$ denotes the disk algebra.  The
Sobolev condition gives
\[
 \|q'\|_{H^2}^2=\sum_{n=1}^\infty n^2|a_{-n}|^2<\infty.
\]
Writing $q'^*$ for the almost-everywhere radial boundary  function
of $q'$,  it follows that
\begin{equation}\label{eq:harmonic-negative-derivative}
 D_-F(e^{\mathrm{i}\theta})
 =-\sum_{n=1}^\infty n a_{-n}e^{-\mathrm{i}n\theta}
 =-\overline{e^{\mathrm{i}\theta} q'^*(e^{\mathrm{i}\theta})}
 \quad\text{almost everywhere}.
\end{equation}
If $F=D_-F=0$ on a set of positive measure, then $q'=0$.   Since
$q(0)=0$, we have $q=0$.  Then $f=p\in H^2$, whose radial boundary function is $F$.
Applying F. and M. Riesz theorem to $p$ gives $p=0$, contrary
to the assumption $f\ne0$.
\end{proof}

Using the fact that an absolutely continuous function is differentiable almost
everywhere, and this derivative agrees almost everywhere with
its weak derivative, we have the following result.

\begin{lemma}\label{lem:har-ac-zero-set}
Let $v\in W^{1,2}(\T;\mathbb C^2)$ and
$E=\{e^{\mathrm{i}\theta}\in\T: v(e^{\mathrm{i}\theta})=0\}$.  Then \[Dv=0\] almost everywhere on $E$.
\end{lemma}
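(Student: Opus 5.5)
This is a real-variable statement, so I would argue pointwise. The only substance is that the a.e. derivative of an absolutely continuous function vanishes at a.e. point of its zero set. Write $v=(F,G)^T$. Since each component lies in $W^{1,2}(\T)$, the function $\theta\mapsto v(e^{\mathrm{i}\theta})$ has a $2\pi$-periodic representative that is absolutely continuous on $[0,2\pi]$. This representative is the continuous one already used in \eqref{eq:har-weak-derivative}. Its classical derivative exists almost everywhere and agrees a.e. with the weak derivative, so $Dv=-\mathrm{i}\,\frac{d v}{d\theta}$ a.e., and $Dv$ may be computed pointwise. It suffices to treat each scalar component, and then the real and imaginary parts of each component separately. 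So fix a real absolutely continuous $\phi$ with zero set $E_\phi\supseteq E$. I will show $\phi'=0$ a.e. on $E_\phi$.

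The key step uses accumulation points. Let $E_\phi'$ be the set of points of $E_\phi$ that are limit points of $E_\phi$. Then $E_\phi\setminus E_\phi'$ consists of isolated points of $E_\phi$, and such a set is countable, since around each isolated point one can choose a rational interval containing no other point of $E_\phi$. Hence this set has measure zero.

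Now take $\theta_0\in E_\phi'$ at which $\phi$ is differentiable. Choose $\theta_k\to\theta_0$ with $\theta_k\in E_\phi$ and $\theta_k\neq\theta_0$. Then
\[
\phi'(\theta_0)=\lim_{k\to\infty}\frac{\phi(\theta_k)-\phi(\theta_0)}{\theta_k-\theta_0}=0 .
\]
This gives $\phi'=0$ a.e. on $E_\phi$, hence a.e. on $E$. Applying this to the four real functions $\operatorname{Re}F$, $\operatorname{Im}F$, $\operatorname{Re}G$, $\operatorname{Im}G$, each of which vanishes on $E$, yields $Dv=0$ a.e. on $E$.

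The only delicate point is the identification of the weak derivative with the pointwise a.e. derivative of the continuous representative. I would justify it by the standard one-dimensional fact that $W^{1,1}$ functions on an interval are exactly the absolutely continuous functions whose derivatives are integrable, with $\phi(\theta)=\phi(0)+\int_0^\theta\phi'$. This applies here because $W^{1,2}(\T)\subset W^{1,1}(\T)$. In Fourier terms, the weak derivative of $F$ is $\sum_j \mathrm{i}\,j\widehat F(j)e^{\mathrm{i}j\theta}\in L^2$, and integrating it recovers $F$ up to a constant. Everything else is the elementary limit argument above.
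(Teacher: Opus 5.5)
Your proof is correct and follows essentially the same route as the paper. The countably many isolated points of the zero set are discarded (via rational arcs), and at every non-isolated point where the absolutely continuous representative is differentiable, the difference quotient taken along zeros forces the derivative to vanish; your reduction to the four real components is harmless but unnecessary, since this argument works directly for the $\mathbb C^2$-valued function, as in the paper.
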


\begin{proof}
Since the isolated points of $E$ are at most countable, for each isolated point we can choose an arc with rational endpoints that meets $E$ at that point alone.
 Distinct isolated points require distinct such arcs.

Let $e^{\mathrm{i}\theta_0}$ be a nonisolated point of $E$ at which both
components of $v$ are differentiable.  Choose certain distinct $e^{\mathrm{i}\theta_n}\in E$ with
$\theta_n\to\theta_0$.  Since $v(e^{\mathrm{i}\theta_n})=v(e^{\mathrm{i}\theta_0})=0$,
\[
 \frac{d}{d\theta}v(e^{\mathrm{i}\theta})\bigg|_{\theta=\theta_0}
 =\lim_{n\to\infty}
   \frac{v(e^{\mathrm{i}\theta_n})-v(e^{\mathrm{i}\theta_0})}{\theta_n-\theta_0}
 =0.
\]
The excluded nondifferentiability set is null, as is the set of
isolated points.  This finishes the proof.
\end{proof}

\begin{lemma}\label{lem:har-invariant-direction}
Let $V:\T\to\mathbb C^2$ be measurable and nonzero almost everywhere.
If
\[
 V(e^{\mathrm{i}2\theta})\wedge V(e^{\mathrm{i}\theta})=0
 \quad\text{for almost every }e^{\mathrm{i}\theta}\in\T,
\]
then there is a fixed complex line $L\subset\mathbb C^2$ such that
$V(e^{\mathrm{i}\theta})\in L$ almost everywhere.
\end{lemma}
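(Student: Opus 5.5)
The plan is to pass to projective space and use the ergodicity of the doubling map $\sigma(e^{\mathrm{i}\theta})=e^{2\mathrm{i}\theta}$ on $\T$ with respect to normalized Lebesgue measure $m$. First we check that $\sigma$ preserves $m$, meaning $m(\sigma^{-1}(E))=m(E)$. This holds because $\sigma^{-1}(E)$ is two half-size copies of $E$, or equivalently because $\int H\circ\sigma\,dm=\int H\,dm$ by comparing zeroth Fourier coefficients. Consequently, $\sigma^{-1}$ of a null set is null. In particular, the set where $V(e^{\mathrm{i}2\theta})=0$ is null. So for almost every $\theta$ both $V(e^{\mathrm{i}\theta})$ and $V(e^{\mathrm{i}2\theta})$ are nonzero, and the wedge condition says they are parallel.

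Let $\pi:\C^2\setminus\{0\}\to\mathbb{CP}^1$ be the canonical projection. Define $\phi=\pi\circ V$ on the full-measure set where $V\neq0$; this map is measurable. The previous paragraph gives
\[
 \phi\circ\sigma=\phi\quad\text{almost everywhere on }\T .
\]
Hence for every Borel set $B\subset\mathbb{CP}^1$, the set $E_B=\phi^{-1}(B)$ satisfies $m(\sigma^{-1}(E_B)\,\triangle\,E_B)=0$, i.e.\ $\chi_{E_B}\circ\sigma=\chi_{E_B}$ in $L^2(\T)$.

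The key step, and the only real obstacle, is ergodicity: an $L^2$ function $\chi$ with $\chi\circ\sigma=\chi$ a.e.\ is constant. I will prove this with Fourier coefficients. The Fourier coefficients of $\chi\circ\sigma$ are
\[
 \widehat{\chi\circ\sigma}(2k)=\widehat\chi(k),\qquad \widehat{\chi\circ\sigma}(2k+1)=0 .
\]
So invariance gives $\widehat\chi(2k)=\widehat\chi(k)$ for all $k$. Iterating, for $k\neq0$ we get $\widehat\chi(k)=\widehat\chi(2^{\ell}k)$ for every $\ell\ge1$, and this tends to $0$ as $\ell\to\infty$ by Riemann--Lebesgue, or simply because $\sum_k|\widehat\chi(k)|^2<\infty$. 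Thus $\widehat\chi(k)=0$ for $k\neq0$, so $\chi$ is constant, and therefore $m(E_B)\in\{0,1\}$ for every Borel $B$.

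Finally, $\mathbb{CP}^1$ is a separable metric space, so it has a countable base $\{U_i\}$. Let $K$ be the intersection of all closed sets $\mathbb{CP}^1\setminus U_i$ with $m(E_{U_i})=0$. Then $\phi\in K$ almost everywhere, since we removed only countably many null sets. Moreover $K$ is a single point. It is nonempty because $\phi$ takes values somewhere on a full-measure set. If $K$ contained two distinct points, choose disjoint basic neighborhoods $U_i,U_{i'}$ of them. Neither was removed, so both $E_{U_i}$ and $E_{U_{i'}}$ have full measure, yet they are disjoint, which is impossible. Hence $\phi\equiv[w]$ a.e.\ for some $w\neq0$, and $L=\C w$ is the required fixed complex line: $V(e^{\mathrm{i}\theta})\in L$ almost everywhere.
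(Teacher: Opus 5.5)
Your proof is correct. It shares its two essential ingredients with the paper's proof: Lebesgue measure is invariant under $e^{\mathrm{i}\theta}\mapsto e^{2\mathrm{i}\theta}$, which makes $V(e^{\mathrm{i}\theta})$ and $V(e^{2\mathrm{i}\theta})$ simultaneously nonzero a.e., and a Fourier-coefficient argument shows that doubling-invariant functions are constant. Where you differ is in how the line-valued invariance is reduced to scalar functions.

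The paper encodes the line $\mathbb C V(e^{\mathrm{i}\theta})$ by the rank-one orthogonal projection $\Pi=VV^*/\|V\|^2$. Its four matrix entries are bounded, measurable, doubling-invariant scalar functions, so each is a.e. constant. Hence $\Pi=\Pi_0$ a.e. and $L=\operatorname{Ran}\Pi_0$. In effect, $\mathbb{CP}^1$ is embedded continuously and injectively into $2\times2$ matrices, and this removes the need for the zero--one law for invariant sets and for your countable-base argument.

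Your route first shows that invariant sets have measure $0$ or $1$, then uses second countability to force an invariant map to be a.e. constant. This is the standard general argument, and it applies verbatim to invariant measurable maps into any separable metric space, at the cost of a few extra steps.

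There is also a minor difference in the Fourier step. You iterate upward, $\widehat\chi(k)=\widehat\chi(2^{\ell}k)\to0$, using square-summability. The paper iterates downward to an odd index and uses $\widehat h(2k+1)=0$. Both are valid.
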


\begin{proof}
For $V(e^{\mathrm{i}\theta})\ne0$, let
\[
 \Pi(e^{\mathrm{i}\theta})=\frac{V(e^{\mathrm{i}\theta})V(e^{\mathrm{i}\theta})^*}{\|V(e^{\mathrm{i}\theta})\|^2},
\]
the rank-one orthogonal projection onto $\mathbb CV(e^{\mathrm{i}\theta})$.
Define it to be an arbitrary fixed rank-one projection on the
null set where $V=0$.  Its matrix entries are bounded measurable
functions.
Since
\[
 \frac1{2\pi}\int_0^{2\pi}\varphi(e^{2\mathrm{i}\theta})\,d\theta
 =\frac1{2\pi}\int_0^{2\pi}\varphi(e^{\mathrm{i}\theta})\,d\theta
 \]
for every integrable $\varphi$,  $V(e^{\mathrm{i}\theta})$ and
$V(e^{2\mathrm{i}\theta})$ are both nonzero almost everywhere.  The determinant
condition says that they span the same line, and therefore
\[
 \Pi(e^{\mathrm{i}2\theta})=\Pi(e^{\mathrm{i}\theta})\quad\text{almost everywhere}.
\]

Define  $T(e^{\mathrm{i}\theta})=e^{\mathrm{i}2\theta}, e^{\mathrm{i}\theta} \in \T$.
Every $h\in L^\infty(\T, d\theta)$ satisfying $h\circ T=h$ is constant almost everywhere.  Indeed,
\begin{align*}
 \widehat{h\circ T}(n)
 &=\frac1{4\pi}\int_0^{4\pi}h(e^{\mathrm{i}t})e^{-\mathrm{i}nt/2}\,dt\\
 &=\frac{1+(-1)^n}{4\pi}
      \int_0^{2\pi}h(e^{\mathrm{i}t})e^{-\mathrm{i}nt/2}\,dt.
\end{align*}
It follows that
\[
 \widehat h(2k+1)=0,\qquad
 \widehat h(2k)=\widehat h(k)\qquad(k\in\mathbb Z).
\]
Since every nonzero integer is $2^ri$ with $i$ odd,  through a repetitive process, $\widehat h(n)=0$ for every $n\ne0$. Thus, $h$ is constant almost everywhere.
Applying this to each matrix entry gives a fixed rank-one
orthogonal projection $\Pi_0$ with $\Pi=\Pi_0$ almost everywhere.
Since $\Pi V=V$, the desired line is $L=\operatorname{Ran}\Pi_0$.
\end{proof}

We now prove the main theorem of this paper.

\begin{proof}[Proof of Theorem \ref{thm:har}]
It suffices to prove necessity. If $f=0$, the conclusion holds, so assume $f\ne0$.  Use $v,u$ from
Lemma~\ref{lem:har-identities} and let
\[
 E=\{e^{\mathrm{i}\theta}\in\T:v(e^{\mathrm{i}\theta})=0\}.
\]
Lemma~\ref{lem:har-ac-zero-set} gives $Dv=0$ almost
everywhere on $E$.  Choose measurable versions of $u$ and $Dv$.
All identities in Lemma~\ref{lem:har-identities} hold outside
a null set; the inverse image of any further null set under the
measure-preserving map $e^{\mathrm{i}\theta} \mapsto  e^{\mathrm{i}2\theta}$ is also null.

Define
\[
 V(e^{\mathrm{i}\theta})=
 \begin{cases}
 v(e^{\mathrm{i}\theta}),&e^{\mathrm{i}\theta}\notin E,\\
 u(e^{\mathrm{i}\theta}),&e^{\mathrm{i}\theta}\in E.
 \end{cases}
\]
We claim that $V(e^{\mathrm{i}2\theta})\wedge V(e^{\mathrm{i}\theta})=0$ almost everywhere.
There are four cases.  If neither $e^{\mathrm{i}\theta}$ nor $e^{2\mathrm{i}\theta}$ lies in
$E$, the claim is \eqref{eq:har-0}.  If $e^{\mathrm{i}\theta}\in E$ but
$e^{\mathrm{i}2\theta}\notin E$, then \eqref{eq:har-1} reduces to
$v(e^{\mathrm{i}2\theta})\wedge u(e^{\mathrm{i}\theta})=0$.  If $e^{\mathrm{i}\theta}\notin E$ but
$e^{\mathrm{i}2\theta}\in E$, the same identity reduces to
$2u(e^{\mathrm{i}2\theta})\wedge v(e^{\mathrm{i}\theta})=0$.  Finally, if both points lie in
$E$, then Lemma \ref{lem:har-ac-zero-set} and
\eqref{eq:har-2} give
\[
 u(e^{\mathrm{i}2\theta})\wedge\bigl(-u(e^{\mathrm{i}\theta})\bigr)=0
\]
outside a null set.  This proves the claim.

By definition $V\ne0$ on $E^c$.  If $e^{\mathrm{i}\theta}\in E$ and
$V(e^{\mathrm{i}\theta})=0$, then $F(e^{\mathrm{i}\theta})=D_-F(e^{\mathrm{i}\theta})=0$.
Lemma~\ref{lem:har-boundary-uniqueness}, applied to the
nonzero function $f$, shows that this exceptional set is null.
Thus $V\ne0$ almost everywhere, and
Lemma~\ref{lem:har-invariant-direction} gives a fixed line
$L$ containing $V(e^{\mathrm{i}\theta})$ almost everywhere.

Choose a nonzero vector $c=(c_1,c_2)^{\mathsf T}$ spanning $L$ and
set $\alpha=-c_2$, $\beta=c_1$.  Then
$(\alpha,\beta)\ne(0,0)$ and
$\alpha V_1+\beta V_2=0$ almost everywhere.  On $E^c$, this is
$\alpha F+\beta G=0$; on $E$, the latter equality holds because
$F=G=0$.  Consequently the boundary function
$\alpha F+\beta G$ vanishes almost everywhere.  Its Poisson
integral is
\[
 \alpha f(z)+\beta g(z)
 =\int_0^{2\pi}\frac{1-|z|^2}{|e^{\mathrm{i}\theta}-z|^2}
       \bigl(\alpha F(e^{\mathrm{i}\theta})+\beta G(e^{\mathrm{i}\theta})\bigr)\,\frac{d\theta}{2\pi}
 =0\qquad(z\in\D),
\]
as desired.
\end{proof}

\begin{remark}\label{rem:harmonic-regularity}
Theorem~\ref{thm:har} includes the $C^2(\T)$ case, and in fact
requires only square-integrable first derivatives.  The
key is the algebraic cancellation
\eqref{eq:harmonic-algebraic-cancellation}: although second-order
matrix asymptotics are used, no second boundary derivatives are
needed.  The theorem allows a common boundary zero set of
positive measure.  On that set the third identity in
Lemma~\ref{lem:har-identities} supplies the missing relation.

The Sobolev condition is strictly weaker than $C^2(\T)$ smoothness;
for example, the periodic function $F(e^{\mathrm{i}\theta})=|\theta|$ for
$-\pi\le\theta\le\pi$ belongs to $W^{1,2}(\T)$ and is not $C^1(\T)$.
Nevertheless, every $W^{1,2}(\T)$ function has a continuous
representative.  Thus the theorem does not remove all boundary
continuity, and makes no assertion for arbitrary bounded harmonic
symbols with only $L^\infty(\T, d\theta)$ boundary values.
\end{remark}

Let $f$ be a bounded harmonic functions on $\D$. For $0<r<1$,
define $f_r(z)=f(rz)$.  It is easy to check that $\|f_r-f\|_{L^2(\D,dA)}\longrightarrow 0\quad(r \to 1^{-})$.
By this approximation, we obtain the following conclusion.
\begin{proposition}\label{prop:har-radial-approximation}
Suppose that $f,g$ are bounded harmonic functions on $\D$. If $\|f_r-f\|_{L^2(\D,dA)}\longrightarrow0,$ then
\[ \left\|(B_{f_r}-B_f)h\right\|\longrightarrow0, \qquad
\left\| [B_{f_r},B_{g_r}] h\right\| \longrightarrow \left\|[B_f,B_g]h\right\|\]
for every $ h \in \Berg$.
\end{proposition}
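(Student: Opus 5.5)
The plan is to reduce everything to the elementary estimate
\[
 \|B_\varphi h\|=\|WP(\varphi h)\|\le\|W\|\,\|\varphi h\|_{L^2(\D,dA)},
\]
which holds for $\varphi\in L^\infty(\D,dA)$. Here $\|W\|\le 1$, because $\|z^{2n}\|^2=1/(2n+1)\ge 1/(n+1)=\|z^n\|^2$, so $W$ does not increase norms on $\Berg$. Since $f_r-f$ is bounded, $\|(B_{f_r}-B_f)h\|\le\|(f_r-f)h\|_{L^2}$.

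The first step is to show $\|(f_r-f)h\|_{L^2}\to0$ for each fixed $h\in\Berg$. We have $\|f_r\|_\infty\le\|f\|_\infty$ by the maximum principle, and $f_r\to f$ pointwise on $\D$. Hence $|(f_r-f)h|^2\le4\|f\|_\infty^2|h|^2$, which is integrable, and dominated convergence gives the claim. Alternatively, for a polynomial $p$ one has $\|(f_r-f)p\|\le\|p\|_\infty\|f_r-f\|_{L^2}\to0$ from the hypothesis. Since polynomials are dense and $\|B_{f_r}-B_f\|\le\sqrt2\cdot2\|f\|_\infty$ uniformly in $r$, an $\varepsilon/3$ argument extends this to all $h$. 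Either route proves the first convergence, and the same applies to $g$.

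For the commutator, write
\[
 B_{f_r}B_{g_r}h-B_fB_gh=B_{f_r}(B_{g_r}-B_g)h+(B_{f_r}-B_f)B_gh.
\]
The first term has norm at most $\sqrt2\|f\|_\infty\|(B_{g_r}-B_g)h\|\to0$, using the uniform bound $\|B_{f_r}\|\le\sqrt2\|f_r\|_\infty\le\sqrt2\|f\|_\infty$. The second term tends to $0$ by the first step applied to the fixed vector $B_gh\in\Berg$. Treating $B_{g_r}B_{f_r}h$ symmetrically gives $[B_{f_r},B_{g_r}]h\to[B_f,B_g]h$ in norm, and convergence of the norms follows from the reverse triangle inequality.

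The only delicate point is that strong convergence is not preserved under products without a uniform bound. That bound is supplied by $\|f_r\|_\infty\le\|f\|_\infty$ together with the stated estimate $\|B_\varphi\|\le\sqrt2\|\varphi\|_\infty$. I expect no genuine obstacle beyond recording this uniform bound.
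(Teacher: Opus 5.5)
Your argument is correct. It supplies exactly the routine details the paper omits, since the paper's proof says only that the verification is straightforward. The ingredients are the bound $\|B_\varphi h\|\le\|W\|\,\|\varphi h\|_{L^2(\D,dA)}$, dominated convergence (or density of polynomials) for the first limit, and the uniform bound $\|B_{f_r}\|\le\sqrt2\|f_r\|_\infty\le\sqrt2\|f\|_\infty$ to carry strong convergence through the products in the commutator.

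There is one slip: the claim $\|W\|\le1$ is false, because the inequality is reversed. In fact $\|z^{2n}\|^2=\frac{1}{2n+1}\le\frac{1}{n+1}=\|z^n\|^2$, so $W$ enlarges the even monomials. Since $W$ sends orthogonal monomials to orthogonal monomials, $\|W\|^2=\sup_n\frac{2n+1}{n+1}=2$. This is exactly where the $\sqrt2$ in $\|B_f\|\le\sqrt2\|f\|_\infty$ comes from, and it is the bound you use later anyway.

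The error is harmless. Replace $1$ by $\sqrt2$ in your first estimate and every later step goes through unchanged. Your dominated-convergence route also shows that the hypothesis $\|f_r-f\|_{L^2(\D,dA)}\to0$ is not needed: pointwise convergence plus $|f_r|\le\|f\|_\infty$ already suffice.
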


\begin{proof}
The verification is straightforward, and we omit the details.
\end{proof}

\begin{remark}\label{rem:harmonic-dilation-caveat}
Radial boundary approximation alone does not extend
Theorem~\ref{thm:har} to arbitrary bounded harmonic symbols.
For each $r<1$, the symbols $f_r,g_r$ are harmonic on a disk
strictly larger than $\overline\D$, so their boundary values are
smooth.  However, if $[B_f,B_g]=0$, Proposition~\ref{prop:har-radial-approximation}
only gives $[B_{f_r},B_{g_r}]\to0$ strongly.  Applying
Theorem~\ref{thm:har} to a dilated pair would require
exact commutativity for that pair, which the convergence
statement alone does not establish.

The matrix entries identify the additional assertion that would
have to be proved.  Write $a_j,b_j$ for the boundary Fourier
coefficients of $f,g$; they belong to $\ell^2(\mathbb Z)$ even
without Sobolev regularity.  The dilated coefficients are
$r^{|j|}a_j,r^{|j|}b_j$, and the harmonic matrix calculation gives
\begin{equation}\label{eq:harmonic-dilated-matrix}
 \begin{split}
 \left([B_{f_r},B_{g_r}]z^n\right)_m
 =\sum_{k=0}^\infty&w(m,k)w(k,n)
       r^{|2m-k|+|2k-n|}\\[-1mm]
 &\times\bigl(a_{2m-k}b_{2k-n}-b_{2m-k}a_{2k-n}\bigr).
 \end{split}
\end{equation}
The series is absolutely convergent by Cauchy--Schwarz,
as in the proof of \eqref{eq:har-commutator-matrix}.  In
particular, taking $m=n=0$ yields
\begin{equation}\label{eq:harmonic-dilated-constant-entry}
 \langle[B_{f_r},B_{g_r}]1,1\rangle
 =\sum_{k=1}^\infty\frac{r^{3k}}{k+1}
       \bigl(a_{-k}b_{2k}-b_{-k}a_{2k}\bigr).
\end{equation}
At $r=1$ the corresponding series is zero for a commuting pair.
Convergence of the weighted series to that value does not by
itself establish that it is zero for every $r<1$.  This observation
is a limitation of the approximation argument, not a claimed
counterexample to preservation of commutativity for harmonic
symbols.  The Sobolev extension above instead uses the original,
exact matrix identities and proves the required limits directly.
\end{remark}

The next example explains why the higher-order identities in Section~\ref{sec:har} are needed even for smooth boundary data.
\begin{example}\label{ex:common-zero}
Let
\[
 I=\{e^{\mathrm{i}\theta}:\pi/2<\theta<\pi\},
\]
and choose two nonzero smooth functions $F,G$ with disjoint compact supports contained in $I$. For instance, set
\[
 \rho(t)=
 \begin{cases}
 \exp\!\bigl(-1/(1-t^2)\bigr),& |t|<1,\\
 0,& |t|\ge1,
 \end{cases}
\]
and, on $0\le\theta\le2\pi$, take
\[
 F(e^{\mathrm{i}\theta})=\rho\!\left(\frac{16}{\pi}\left(\theta-\frac{5\pi}{8}\right)\right),
 \qquad
 G(e^{\mathrm{i}\theta})=\rho\!\left(\frac{16}{\pi}\left(\theta-\frac{7\pi}{8}\right)\right).
\]
Their periodic extensions are $C^\infty(\T)$, and their Poisson extensions $f,g$ are bounded harmonic functions. Whenever $e^{\mathrm{i}\theta}\in I$, one has $e^{\mathrm{i}2\theta}\notin I$. Thus
\[
 F(e^{\mathrm{i}2\theta})G(e^{\mathrm{i}\theta})=G(e^{\mathrm{i}2\theta})F(e^{\mathrm{i}\theta})=0
 \qquad(e^{\mathrm{i}\theta}\in\T).
\]
The leading boundary identity is therefore satisfied identically. Nevertheless, $F$ and $G$ are linearly independent because their supports are disjoint and both are nonzero. Their Poisson extensions are consequently linearly independent. Theorem~\ref{thm:har} gives $[B_f,B_g]\ne0$. This example also has a common boundary zero set of positive measure.
\end{example}

\begin{remark}
The two main theorems cover different symbol classes. The polyanalytic theorem requires boundedness of the full symbol, while its analytic components may be unbounded. The harmonic theorem allows $\overline{z}^k~(k\ge 1)$ and common boundary zero sets of positive measure under the stated $W^{1,2}(\T)$ hypothesis, which still provides continuous representatives. Neither theorem settles commutativity for arbitrary bounded harmonic symbols. The additional assertion needed for the radial boundary approximation argument is identified in Remark~\ref{rem:harmonic-dilation-caveat}.
\end{remark}

The case $s=1$ of the following theorem without boundary condition comes from Proposition 2.1 in \cite{WangZhao}.  Now,  we  consider the case $s\ge 2$.
\begin{theorem}\label{thm:har-zero}
Let $s\ge2$, and let $f_1,\ldots,f_s$ be bounded harmonic functions
on $\D$ whose radial boundary functions $F_1,\ldots,F_s$ belong to
$W^{s-1,2}(\T)$. Then $B_{f_1}\cdots B_{f_s}=0$ if and only if
\[ f_j\equiv0\]
for some $1 \le j \le s$.
\end{theorem}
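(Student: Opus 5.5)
Sufficiency is immediate: if some $f_j\equiv0$ then $B_{f_j}=0$, so the product vanishes. For necessity we mimic the proof of Lemma~\ref{lem:har-identities}. We read off the leading-order asymptotics of the matrix entries of $B_{f_1}\cdots B_{f_s}$ along the diagonal direction $n\approx 2^s m$. This produces a boundary identity, and we then use uniqueness to force some $F_j\equiv0$.

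\textbf{Matrix entries and their expansion.} Write $a^{(i)}_j$ for the Fourier coefficients of $F_i$. Iterating \eqref{eq:har-matrix} gives
\[
 \bigl(B_{f_1}\cdots B_{f_s}z^n\bigr)_m=\sum_{k_1,\dots,k_{s-1}\ge0}\prod_{i=1}^{s} w(k_{i-1},k_i)\,a^{(i)}_{2k_{i-1}-k_i},
\]
with $k_0=m$ and $k_s=n$. Fix $d\in\Z$ and put $N=2^sm$, $n=N-d$, letting $N\to\infty$ through multiples of $2^s$. Reparametrise the intermediate indices by the Fourier indices $j_1,\dots,j_{s-1}$, so that $k_i\approx N/2^i$ minus a linear combination of the $j$'s. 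The $i$-th Fourier index is $j_i$ for $i<s$, and the last one is $d-\sum_{i<s}2^{s-i}j_i$. Each weight $w(k_{i-1},k_i)=\min\{1,(2k_{i-1}+1)/(k_i+1)\}$ equals $1+O(N^{-1})$ times a polynomial in the $j$'s. Hence the entry has the form
\[
 \sum_{\ell=0}^{s-1}N^{-\ell}S_\ell(d)+O(N^{-s}).
\]
Here $S_\ell(d)$ is a multi-sum of the coefficient product against a polynomial weight of total degree $\ell$ in the indices. The regularity $F_i\in W^{s-1,2}$ is exactly what makes the weight-$(1+|j|^{s-1})$ sums absolutely convergent. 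We obtain this by generalised Cauchy--Schwarz as in \eqref{eq:har-con-domination}, using $|k|\ge|j|$-type domination for the extreme indices.

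\textbf{Identifying the first coefficient.} The error term must be uniformly bounded, in analogy with \eqref{eq:har-uniform-remainder}. We would Taylor expand each weight to order $s-1$ and bound the remainder by a product of $(1+|j_i|)$ powers. Since the product operator is zero, all $S_\ell(d)$ vanish. At $\ell=0$ this gives
\[
 \sum_{j_1,\dots,j_{s-1}}a^{(1)}_{j_1}\cdots a^{(s)}_{d-\sum 2^{s-i}j_i}=0,
\]
which says that the $d$-th Fourier coefficient of
\[
 \Phi(e^{i\theta})=F_1(e^{i2^{s-1}\theta})F_2(e^{i2^{s-2}\theta})\cdots F_s(e^{i\theta})
\]
vanishes. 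So $\Phi\equiv0$; the $F_i$ are continuous, so this holds everywhere.

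\textbf{Uniqueness and the higher identities.} If all $F_i\ne0$, then by Lemma~\ref{lem:har-boundary-uniqueness} each $F_i$ is nonzero on an open arc. Preimages under the doubling map are measure preserving, but the sets $\{F_i(e^{i2^{s-i}\theta})\ne0\}$ may still be disjoint, as in Example~\ref{ex:common-zero}. So the $\ell=0$ identity alone does not suffice, and this is the main obstacle. As in the proof of Theorem~\ref{thm:har}, the higher identities $S_\ell=0$ ($\ell\le s-1$) must be used. They are boundary identities involving the $D_-$ and $D$ derivatives of the factors, up to total order $s-1$.

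On the zero set $E_i$ of each $F_i$, Lemma~\ref{lem:har-ac-zero-set}, applied iteratively to the $W^{s-1,2}$ functions, kills derivatives of order at most $s-2$. Only $D_-$ terms survive, and by Lemma~\ref{lem:har-boundary-uniqueness} they cannot vanish together with $F_i$ on a set of positive measure. So for each $i$ define $V_i=F_i$ off $E_i$ and $V_i=D_-F_i$ on $E_i$. Then show by induction on the number of factors lying in their zero sets that the appropriate $S_\ell$ identity reduces to
\[
 \prod_i V_i(e^{i2^{s-i}\theta})=0\quad\text{a.e.}
\]
We expect that the triangular algebraic cancellation needed here, analogous to \eqref{eq:harmonic-algebraic-cancellation}, is the hardest computation.

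Once this holds, each $V_i\ne0$ a.e., while the maps $\theta\mapsto2^{k}\theta$ are measure preserving. So each set $\{V_i(e^{i2^{s-i}\theta})=0\}$ is null, and a finite union of null sets cannot be all of $\T$. This contradicts the vanishing of the product, so some $f_j\equiv0$ by the Poisson integral.
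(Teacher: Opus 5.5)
Your overall architecture matches the paper's. You expand the entries of $B_{f_1}\cdots B_{f_s}$ to order $s-1$, read off $\prod_j H_j\equiv0$ with $H_j(e^{\mathrm{i}\theta})=F_j(e^{\mathrm{i}2^{s-j}\theta})$, and then use higher identities together with $D_-$ and Lemma~\ref{lem:har-boundary-uniqueness}. However, the decisive step is missing. You only assert that ``the appropriate $S_\ell$ identity reduces to $\prod_iV_i(e^{\mathrm{i}2^{s-i}\theta})=0$,'' and you expect a cancellation analogous to \eqref{eq:harmonic-algebraic-cancellation}. No cancellation is involved; what is needed is a counting argument you do not give.

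Let $J$ be the set of indices with $H_j=0$ at the point, let $r=|J|$, and use the order-$r$ identity. In the expansion of $p_r(\mathbf k)$, a term with $\nu_j=0$ for some $j\in J$ carries only powers of $k_j$ coming from the factors $(L_i-1)^{\nu_i-1}$, and never $\min\{k_j,0\}$. So that factor is an ordinary derivative $D^\alpha H_j$, which vanishes there. Hence $\nu_j\ge1$ on $J$, and $\sum_j\nu_j=r=|J|$ forces $\nu_j=1$ on $J$ and $\nu_j=0$ off $J$. That term has coefficient exactly $\prod_{j\in J}d_j\min\{k_j,0\}$, so $U_r=\prod_{j\in J}D_-H_j\prod_{j\notin J}H_j$ there. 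Your claim that ``only $D_-$ terms survive'' on zero sets is false before this counting, since factors $D^\alpha D_-H_j$ also survive; it is the order constraint that excludes them.

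Second, your target identity cannot be derived a.e.\ on all of $\T$. On the common zero set $Z=\bigcap_j\{H_j=0\}$ the ``appropriate'' order would be $s$, but $W^{s-1,2}$ only yields identities up to order $s-1$. Worse, every $U_r$ with $r\le s-1$ vanishes trivially a.e.\ on $Z$: each term has some $\nu_j=0$, hence a factor $D^\alpha H_j$ that is zero there. So the available identities say nothing about $\prod_jD_-H_j$ on $Z$, and nothing excludes $Z$ from having positive measure. The argument must be run only on $\T\setminus Z$. This suffices, because $\T\setminus Z$ contains the nonempty open set $\{H_1\ne0\}$, but you need to say so.

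The paper handles both issues at once. It chooses a point maximizing the number of nonzero values $H_j(e^{\mathrm{i}\theta})$. On an open arc $I$ around that point the vanishing set $J$ is fixed with $|J|\le s-1$, and the $H_j$ with $j\in J$ vanish identically on $I$. So all their derivatives vanish there trivially, with no need to iterate Lemma~\ref{lem:har-ac-zero-set}.

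If you keep your pointwise route, note two further points. First, the iteration gives $D^\alpha H_j=0$ a.e.\ on $\{H_j=0\}$ for all $\alpha\le s-1$, not only $\alpha\le s-2$, and you need the full range (for example $\alpha=1$ when $s=2$). Second, your $O(N^{-s})$ remainder is not justified by the weight $1+|j|^{s-1}$ you cite. You only need $o(N^{-(s-1)})$, which that weight plus dominated convergence provides.
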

\begin{proof}
It suffices to prove necessity. Suppose that
$A=B_{f_1}\cdots B_{f_s}=0$, and set
\[
 q=s-1,\qquad Q=2^s,\qquad d_j=2^{s-j},\qquad
 H_j(e^{\mathrm{i}\theta})=F_j(e^{\mathrm{i}d_j\theta}).
\]
Write
\[
 F_j(e^{\mathrm{i}\theta})
 =\sum_{k=-\infty}^\infty a_k^{(j)}e^{\mathrm{i}k\theta}.
\]
The Sobolev hypothesis gives
\begin{equation}\label{eq:har-zp-sobolev}
 \sum_{k=-\infty}^\infty (1+|k|^{2q})|a_k^{(j)}|^2<\infty,
 \qquad \{a_k^{(j)}\}\in\ell^1.
\end{equation}

Fix $\ell\in\Z$. For $\mathbf k=(k_1,\ldots,k_s)\in\Z^s$, set
\[
 L_j=\sum_{i=1}^j2^{j-i}k_i,\qquad x_0=N,\qquad
 x_j=2^jN-L_j=2x_{j-1}-k_j.
\]
When $\sum_{j=1}^s d_jk_j=\ell$, we have $x_s=QN-\ell$.
For all sufficiently large  integers $N >0$, iteration of
\eqref{eq:har-matrix} gives
\begin{equation}\label{eq:har-zp-exact}
 0=(Az^{QN-\ell})_N
 =\sum_{\sum_jd_jk_j=\ell}\omega_N(\mathbf k)
                         \prod_{j=1}^s a_{k_j}^{(j)},
\end{equation}
where
\[
 \omega_N(\mathbf k)=
 \begin{cases}
 \displaystyle\prod_{j=1}^s
 \left(1+\frac{\min\{k_j,0\}}{x_j+1}\right),
       &x_1,\ldots,x_{s-1}\ge0,\\[2mm]
 0,&\text{otherwise}.
 \end{cases}
\]
In particular, $0\le\omega_N\le1$. The absolute summability in
\eqref{eq:har-zp-sobolev} justifies \eqref{eq:har-zp-exact} by
uniform approximation of the symbols by harmonic polynomials.

Let $t=(QN)^{-1}$. For each fixed $\mathbf k$, the intermediate
indices are eventually nonnegative, and
\[
 \omega_N(\mathbf k)=
 \prod_{j=1}^s
 \left(1+\frac{d_j\min\{k_j,0\}t}
                  {1-d_j(L_j-1)t}\right).
\]
The coefficients of this power series are $p_0=1$ and
\begin{equation}\label{eq:har-zp-pr}
 p_r(\mathbf k)=
 \sum_{\substack{\nu_1,\ldots,\nu_s\ge0\\
                 \nu_1+\cdots+\nu_s=r}}
 \prod_{\{j:\nu_j>0\}}
 d_j^{\nu_j}\min\{k_j,0\}(L_j-1)^{\nu_j-1}
 \qquad(r\ge1).
\end{equation}
If $M=1+\sum_{j=1}^s |k_j|$, then
\[
 |p_r(\mathbf k)|\le C_{s,q}M^r\qquad(0\le r\le q)
\]
and
\begin{equation}\label{eq:har-zp-remainder}
 t^{-q}\left|\omega_N(\mathbf k)
       -\sum_{r=0}^{q-1}p_r(\mathbf k)t^r\right|
 \le C_{s,q}M^q.
\end{equation}
Indeed, if $N\ge2M$, all intermediate indices are positive and
\[
 \frac{|L_j-1|}{2^jN}\le\frac{M}{2N}\le\frac14.
\]
The estimate follows by expanding the geometric series and bounding
the terms of degree at least $q$. If $N<2M$, it follows from
$0\le\omega_N\le1$, the preceding bounds for $p_r$, and
$t^{-1}=QN<2QM$. For each fixed $\mathbf k$, the expression in
\eqref{eq:har-zp-remainder}, without absolute values, tends to
$p_q(\mathbf k)$.

For every fixed $\ell$, one also has
\begin{equation}\label{eq:har-zp-summability}
 \sum_{\sum_jd_jk_j=\ell}
 M^q\prod_{j=1}^s|a_{k_j}^{(j)}|<\infty.
\end{equation}
To see this, use $M^q\le C_{s,q}(1+\sum_j|k_j|^q)$.
For a term involving $|k_j|^q$, fix all indices except $k_j$
and $k_h$, where $h\ne j$. The remaining indices satisfy
$d_jk_j+d_hk_h=b$. Both coordinate projections of the solution
set are injective, so Cauchy--Schwarz bounds the corresponding
sum by
\[
 \left\|\left\{|k|^q a_k^{(j)}\right\}\right\|_{\ell^2}
 \left\|\left\{a_k^{(h)}\right\}\right\|_{\ell^2}.
\]
Summing over the other indices and using their $\ell^1$ norms
proves \eqref{eq:har-zp-summability}.

Set
\[
 S_r(\ell)=\sum_{\sum_jd_jk_j=\ell}
                p_r(\mathbf k)\prod_{j=1}^s a_{k_j}^{(j)}.
\]
By \eqref{eq:har-zp-remainder}, \eqref{eq:har-zp-summability},
and dominated convergence, \eqref{eq:har-zp-exact} becomes
\[
 0=\sum_{r=0}^q\frac{S_r(\ell)}{(QN)^r}+o(N^{-q}).
\]
Letting $N\to\infty$ and then multiplying successively by powers
of $QN$ gives
\begin{equation}\label{eq:har-zp-vanishing}
 S_r(\ell)=0\qquad(0\le r\le q,\ \ell\in\Z).
\end{equation}

We next identify these coefficient identities on $\T$.
For trigonometric polynomials, expansion of
\eqref{eq:har-zp-pr} shows that $S_r(\ell)$ is the $\ell$-th
Fourier coefficient of a finite sum $U_r$ of products whose
factors have the form $D^\alpha H_j$ or
$D^\alpha D_-H_j$, with total differential order at most
$r\le q$. Every factor of order at most $q-1$ is bounded.
At most one factor has order $q$, and it belongs to $L^2(\T, d\theta )$.
These products therefore depend continuously on
$(F_1,\ldots,F_s)\in(W^{q,2}(\T))^s$ with values in $L^2(\T, d\theta)$.
Approximation by trigonometric polynomials in $W^{q,2}$, together
with \eqref{eq:har-zp-summability}, gives $U_r\in L^2(\T,d\theta)$ with
\[
 \widehat U_r(\ell)=S_r(\ell).
\]
Thus \eqref{eq:har-zp-vanishing} and uniqueness of Fourier
coefficients yield
\begin{equation}\label{eq:har-zp-Ur-zero}
 U_0=\cdots=U_q=0\quad\text{a.e. on }\T.
\end{equation}
In particular,
\[
 U_0(e^{\mathrm{i}\theta})
 =\prod_{j=1}^sH_j(e^{\mathrm{i}\theta})=0
 \qquad(e^{\mathrm{i}\theta}\in\T),
\]
since each $H_j$ is continuous.

Assume that all $f_j$ are nonzero, and let $m$ be the maximum
number of nonzero values among
$H_1(e^{\mathrm{i}\theta}),\ldots,H_s(e^{\mathrm{i}\theta})$.
Then $1\le m\le s-1$. Choose a point attaining this maximum.
The corresponding $m$ functions remain nonzero on a nonempty
open arc $I$ about that point. By maximality, the remaining
$r=s-m$ functions vanish identically on $I$. Let $J$ be their
index set, so that $|J|=r\le q$.

Consider \eqref{eq:har-zp-pr} at this order $r$. If $\nu_j=0$
for some $j\in J$, the corresponding differential expression
contains only an ordinary derivative of $H_j$ in that factor;
it vanishes on $I$. A nonzero contribution on $I$ therefore
requires $\nu_j\ge1$ for every $j\in J$. Since
$\sum_{j=1}^s \nu_j=r=|J|$, the only possible contribution has
$\nu_j=1$ for $j\in J$ and $\nu_j=0$ otherwise. Consequently,
using
\[
 D_-H_j(e^{\mathrm{i}\theta})
 =d_j(D_-F_j)(e^{\mathrm{i}d_j\theta}),
\]
we obtain
\[
 U_r(e^{\mathrm{i}\theta})
 =\left(\prod_{j\in J}D_-H_j(e^{\mathrm{i}\theta})\right)
  \left(\prod_{j\notin J}H_j(e^{\mathrm{i}\theta})\right)
 \quad\text{for a.e. }e^{\mathrm{i}\theta}\in I.
\]
By \eqref{eq:har-zp-Ur-zero} and the nonvanishing of the second
product on $I$,
\begin{equation}\label{eq:har-zp-local-contradiction}
 \prod_{j\in J}D_-H_j(e^{\mathrm{i}\theta})=0
 \quad\text{for a.e. }e^{\mathrm{i}\theta}\in I.
\end{equation}

For each $j\in J$, $H_j$ is the boundary function of the nonzero
bounded harmonic function $z\mapsto f_j(z^{d_j})$, and
$H_j\in W^{q,2}(\T)\subset W^{1,2}(\T)$.
Lemma~\ref{lem:har-boundary-uniqueness} shows that
\[
 \{e^{\mathrm{i}\theta}\in\T:
 H_j(e^{\mathrm{i}\theta})=D_-H_j(e^{\mathrm{i}\theta})=0\}
\]
is a null set. Since $H_j=0$ on $I$, every
$D_-H_j$ with $j\in J$ is nonzero almost everywhere on $I$.
Their finite product is therefore nonzero almost everywhere
there, contradicting \eqref{eq:har-zp-local-contradiction}.
Hence $f_j\equiv0$ for some $j$. The proof is finished.
\end{proof}

\begin{question}
Is it possible to remove the boundary conditions in the theorems  \ref{thm:har} and \ref{thm:har-zero}?
\end{question}
Up to now,  we have not found the counterexamples.


\par\bigskip
\noindent
\begin{minipage}{\linewidth}
\small
\textbf{E-mail addresses:}\\[0.3ex]
\texttt{\ManuscriptEmail}
\end{minipage}
\par

\end{document}